\documentclass[12pt, final]{amsart}
\usepackage{amsmath, amsxtra, amssymb, mathdots}
\usepackage{verbatim}
\usepackage{multirow}

\usepackage{amsmath}
\usepackage{amssymb}
\usepackage{amscd}
\usepackage{latexsym}
\usepackage{amsthm}
\usepackage{mathrsfs}
\usepackage{color}
\usepackage{amsfonts}
\usepackage{enumitem}
\usepackage{array}
\usepackage{setspace}
\usepackage{mathtools}
\usepackage{tikz}
\usepackage{tikz-cd}
\usepackage[notcite, notref]{showkeys}
\usepackage[colorlinks, linkcolor=blue, citecolor=red, urlcolor=black]{hyperref}
\usetikzlibrary{arrows}
\usetikzlibrary{knots,patterns}

\usepackage[scale=0.75]{geometry}

\newtheorem{thm}{Theorem}[section]
\newtheorem*{thm*}{Theorem}
\newtheorem{cor}[thm]{Corollary}
\newtheorem{corollary}[thm]{Corollary}

\newtheorem*{cor*}{Corollary}
\newtheorem{lem}[thm]{Lemma}
\newtheorem*{lem*}{Lemma}
\newtheorem{lemma}[thm]{Lemma}
\newtheorem{claim}[thm]{Claim}

\newtheorem{prop}[thm]{Proposition}
\newtheorem*{prop*}{Proposition}

\theoremstyle{definition}
\newtheorem{defn}[thm]{Definition}
\newtheorem{definition}[thm]{Definition}

\newtheorem*{defn*}{Definition}
\newtheorem{conjecture}{Conjecture}
\newtheorem*{conjecture*}{Conjecture}

\newtheorem*{condition*}{Condition}
\newtheorem*{assumption*}{Assumption}

\theoremstyle{remark}
\newtheorem{rem}[thm]{Remark}
\newtheorem{remark}[thm]{Remark}

\newtheorem*{rem*}{Remark}

\newtheorem*{problem*}{Problem}

\numberwithin{equation}{section}

\newcommand\modp[2]{\langle #1\rangle_{#2}}

\def\co{\colon\thinspace} 
\def\QQ{\mathbb{Q}}
\def\RR{\mathbb{R}}

\def\ZZ{\mathbb{Z}}

\newcommand{\ra}{\rightarrow}

\newcommand{\BR}{\mathbb R}

\newcommand{\BZ}{\mathbb Z}

\newcommand{\DD}{\mathbb D}

\newcommand{\CM}{\overline{\mathcal M}}
\newcommand{\M}{\overline M}

\newcommand{\wt}{\widetilde}

\DeclareMathOperator{\Sym}{Sym}

\DeclareMathOperator{\supp}{supp}

\DeclareMathOperator{\Pic}{Pic}
\DeclareMathOperator{\Span}{Span}
\DeclareMathOperator{\Fun}{Fun}

\DeclareMathOperator{\PBDcone}{PBDcone}

\begin{document}

\title{Symmetric and non-symmetric F-conjectures are equivalent}
\author{Maksym Fedorchuk}
\address[Fedorchuk]{Department of Mathematics, Boston College, 140 Commonwealth Ave, Chestnut Hill, MA 02467, USA}
\email{maksym.fedorchuk@bc.edu}
\author{Anton Mellit}
\address[Mellit]{Faculty of Mathematics, University of Vienna, 
Oskar-Morgenstern-Platz 1, 1090 Vienna, Austria}
\email{anton.mellit@univie.ac.at}
\date{\today}

\begin{abstract}
    The F-conjecture gives a conjectural description of the ample cone of the Deligne-Mumford moduli space $\M_{g,n}$. We prove that the $S_n$-symmetric and the non-symmetric F-conjectures are equivalent. We also prove the Strong F-conjecture for $\M_{0,8}$ (and give an alternative proof for $\M_{0,7})$.
    Finally, we derive, as a consequence, the F-conjecture for the moduli space of stable curves $\M_{g}$ up to genus $g\leq 44$.
\end{abstract}

\maketitle

\section{Introduction}

We describe some recent progress towards answering 
Mumford's question about ample line bundles on the Deligne-Mumford moduli space $\M_{g,n}$ of 
stable curves, conjecturally given by a well-known F-conjecture:

\begin{conjecture}[{F-conjecture for $\M_{g,n}$ \cite[Conjecture (0.2)]{GKM}}]\label{Cgn}
A divisor on $\M_{g,n}$ is nef if and only if it is F-nef. In particular, the nef cone is polyhedral. 
\end{conjecture}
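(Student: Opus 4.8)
The plan is to reduce the full F-conjecture, in arbitrary genus and with arbitrarily many marked points, to a single genus-zero, $S_n$-symmetric assertion, and then to attack that assertion by induction on $n$. The first reduction is the theorem of Gibney--Keel--Morrison \cite{GKM}: if on $\M_{0,n}$ every F-nef divisor is nef for all $n$, then on $\M_{g,n}$ every F-nef divisor is nef for all $g$ and $n$. This step needs no new idea beyond what is already available, so from now on one works only on $\M_{0,n}$.

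The second reduction is supplied by the main result of the present paper: for a fixed $n$, the $S_n$-symmetric F-conjecture for $\M_{0,n}$ is equivalent to the full F-conjecture for $\M_{0,n}$. Hence it suffices to prove that for every $n$, every $S_n$-invariant F-nef class on $\M_{0,n}$ is nef. The gain in passing to the symmetric setting is one of dimension: $N^1(\M_{0,n})^{S_n}$ has dimension $\nhalf-1$, spanned by the symmetrized boundary classes, so the symmetric F-cone $\mathrm{FNef}(\M_{0,n})^{S_n}$ is a rational polyhedral cone of slowly growing dimension whose finitely many extremal rays can be listed explicitly for each fixed $n$.

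The inductive core is then the following: take $n\le 8$ (the cases treated in this paper) as the base, and for the inductive step enumerate the extremal rays of $\mathrm{FNef}(\M_{0,n})^{S_n}$ and exhibit a semiample representative of each. The ambient supply of known-nef classes to draw on consists of (i) pullbacks of inductively-known nef classes under the symmetrized forgetful maps $\M_{0,n}\to\M_{0,n-1}$, the Hassett reduction morphisms $\M_{0,n}\to\M_{0,\mathcal A}$, and the symmetrized gluing maps; (ii) the conformal blocks divisors of Fakhruddin, which are basepoint-free by construction; and (iii) symmetrizations of $\psi$-classes together with their boundary twists. One must prove that the convex cone generated by these contains $\mathrm{FNef}(\M_{0,n})^{S_n}$, i.e.\ that no extremal ray of the symmetric F-cone escapes this list, and that each such generator is in fact semiample.

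That last containment is the main obstacle, and it is genuinely open. A priori the symmetric F-cone could acquire, above some threshold value of $n$, a new extremal ray that is neither pulled back from $\M_{0,n-1}$ nor a nonnegative combination of conformal blocks divisors --- it is known in the non-symmetric setting that conformal blocks divisors need not span the nef cone, and nothing presently rules out the analogous phenomenon after symmetrization. Breaking this obstacle would require either a new construction of nef divisors tailored to the symmetric F-extremal rays (for instance a GIT or Kapranov-type birational model realizing each such ray as a genuine morphism), or a structural boundedness statement asserting that every symmetric F-extremal ray of $\M_{0,n}$ is inherited from bounded $n$. A secondary, purely technical difficulty is the combinatorial bookkeeping --- verifying F-nefness of candidate generators and tracking the symmetric boundary stratification as $n\to\infty$ --- which is in principle routine but grows rapidly with $n$.
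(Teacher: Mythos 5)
The statement you were asked to prove is Conjecture~\ref{Cgn}, which is a \emph{conjecture}: the paper does not prove it, and to date nobody has. The paper's actual contributions are (a) the equivalence of the symmetric and non-symmetric F-conjectures (Theorem~\ref{T:equivalence}), (b) the Strong F-conjecture for $\M_{0,8}$ (Theorem~\ref{n=8}), and (c), via Theorem~\ref{T:strong-sym}, the F-conjecture for $\M_g$ for $g\leq 44$. Your proposal correctly identifies the two reductions that this paper and GKM supply --- the Bridge Theorem to pass from $\M_{g,n}$ to $\M_{0,n}$, and Theorem~\ref{T:equivalence} to pass from the non-symmetric to the $S_n$-symmetric genus-zero statement --- and then honestly flags that the remaining containment, $\mathrm{FNef}(\M_{0,n})^{S_n}\subset\mathrm{Nef}(\M_{0,n})$ for all $n$, is open. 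That is exactly where the gap is: your ``inductive core'' is a wish, not an argument, and you say so yourself. So there is nothing in the paper to compare against, and nothing in your proposal that closes the conjecture.

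Two substantive corrections to your framing. First, the paper does not attempt any inductive closure of the symmetric F-cone by semiample or conformal blocks classes; for the base cases it instead passes to the \emph{dual} picture --- effective pairwise balanced designs versus the F-curve cone --- and uses a linear-programming criterion on supports (Conjecture~\ref{DualCstrong} and the machinery around critical subsets) to verify $\M_{0,8}$ by computer. The bound $g\leq 44$ then comes from Theorem~\ref{T:strong-sym}: knowing the Strong F-conjecture through $\M_{0,8}$ forces stratal effectivity of all symmetric F-nef classes on $\M_{0,n}$ for $n\leq (9)(10)/2-1=44$. Second, your hope that symmetrization tames the problem is explicitly undercut by the paper itself: the remark following Theorem~\ref{thm:main theorem} shows that Pixton's counterexample produces, after the main construction, a \emph{symmetric} F-nef divisor that is not stratally effective boundary, disproving the Moon--Swinarski conjecture. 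So a proof cannot go through stratal effectivity in the symmetric setting for all $n$, and any correct strategy must contend with that. In short: nothing wrong with your reductions, but the conjecture remains open, and the methods you list (conformal blocks, forgetful pullbacks, Hassett reductions) are not known to --- and after Pixton quite plausibly do not --- exhaust the symmetric F-cone.
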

Recall that a divisor on $\M_{g,n}$ is \emph{F-nef} if it  intersects non-negatively with all the $1$-dimensional boundary strata of $\M_{g,n}$ (the F-curves).
The genus $0$ version is:
\begin{conjecture}[F-conjecture for $\M_{0,n}$]\label{C0n}
A divisor on $\M_{0,n}$ is nef if and only if it is F-nef.
\end{conjecture}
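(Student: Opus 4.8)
The plan is to prove the substantive ``only if'' direction (F-nef $\Rightarrow$ nef) by induction on $n$, the ``if'' direction being immediate because every F-curve is an effective one-cycle. The first move is to replace the problem by its $S_n$-symmetric version: by the equivalence of the symmetric and non-symmetric F-conjectures established in this paper, it suffices to verify Conjecture~\ref{C0n} for $S_n$-invariant divisor classes, which replaces $\Pic(\M_{0,n})_\QQ$ (of dimension growing like $2^{n-1}$) by the small space $\Pic(\M_{0,n})^{S_n}_\QQ$ spanned by the symmetrized boundary classes $B_2,\dots,B_{\lfloor n/2\rfloor}$; the cone one must control then has dimension only $\lfloor n/2\rfloor-1$.

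To make induction on $n$ work, I would strengthen the statement to a form that propagates: a ``Strong F-conjecture'' asserting that every F-nef divisor $D$ on $\M_{0,n}$ is a non-negative combination of an explicit finite list of divisors already known to be nef --- pullbacks of nef classes under the $n$ forgetful morphisms $\M_{0,n}\to\M_{0,n-1}$, pullbacks of ample classes under the contractions to $\PP^{n-3}$, and the remaining basic nef divisors (of $\psi$- or conformal-block type). The inductive step is then: given an F-nef $D$ on $\M_{0,n}$, its restriction to each boundary divisor $\delta_{0,I}\cong\M_{0,|I|+1}\times\M_{0,n-|I|+1}$ is again F-nef, hence --- by the inductive hypothesis in its strong form --- nef; thus $D$ is nef along the whole boundary, and one is left to bridge the gap between ``nef on the boundary'' and ``nef.''

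The induction is anchored by finite computations. Since there are only finitely many F-curves up to equivalence, the F-nef cone is polyhedral, and for small $n$ one checks directly that it coincides with the nef cone --- for $n\le 7$ by the known results, and, with the symmetric reduction making the computation tractable, for $n=8$ as well --- either by exhibiting a nef divisor on each facet via the contractions and fibration-type maps above, or by a polyhedral computation in $\Pic(\M_{0,n})^{S_n}_\QQ$. These base cases also furnish the explicit list of nef generators used in the strong form.

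The main obstacle is the last bridge in the inductive step. An F-nef divisor that is nef along the boundary need not be visibly nef, because $\overline{\NE}(\M_{0,n})$ may a priori contain extremal curve classes that are neither F-curves nor supported on the boundary, and both $\dim\Pic(\M_{0,n})$ and the supply of candidate curve classes grow rapidly with $n$, so no uniform enumeration will close this gap. Controlling these ``interior'' extremal rays --- whether by a structural description of $\overline{\NE}(\M_{0,n})$ in the spirit of Keel--McKernan's analysis of rational curves on $\M_{0,n}$, or by an argument forcing any such ray to be detected already after a forgetful pushforward or a boundary restriction --- is where the real work lies, and is the reason the general F-conjecture for $\M_{0,n}$ remains open beyond the low-$n$ range that the symmetry reduction and the strong inductive form bring within reach.
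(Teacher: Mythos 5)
The statement you were given is Conjecture \ref{C0n}, which is open: the paper does not prove it, and your text is a program rather than a proof --- you yourself concede that the bridge from ``nef along the boundary'' to ``nef'' is unfilled, so the gap is admitted. Beyond that, two of your structural moves conflict with what the paper actually establishes. First, Theorem \ref{T:equivalence} is an equivalence of the symmetric and non-symmetric conjectures only when quantified over \emph{all} $n$: the construction behind it (Theorem \ref{thm:main theorem}) realizes a given F-nef divisor on $\M_{0,n}$ as the pullback, under an attaching map, of a symmetric F-nef divisor on $\M_{0,\mathbf A}$, where $\mathbf A$ is divisible by a product of $n-1$ distinct primes and is therefore vastly larger than $n$. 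So it is not true that it suffices to verify Conjecture \ref{C0n} for $S_n$-invariant classes on the \emph{same} $\M_{0,n}$, and the symmetric reduction does not shrink the problem to a cone of dimension $\lfloor n/2\rfloor-1$; in particular the paper's proof of Theorem \ref{n=8} is a fully non-symmetric computation, carried out on the dual side with effective pairwise balanced designs and a criticality criterion for pruning extremal rays, not a symmetric polyhedral check made ``tractable'' by the reduction.

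Second, the strengthened inductive hypothesis you propose is in substance the Strong F-conjecture \ref{Cstrong} (an F-nef divisor is an effective, indeed stratally effective, combination of boundary, which is exactly what makes the restriction-to-boundary ``Effective Dichotomy'' step close the nefness argument). But Pixton's counterexample shows \ref{Cstrong} fails for all $n\geq 12$, and the remark following Theorem \ref{thm:main theorem} shows that even \emph{symmetric} F-nef divisors need not be stratally effective boundary. Hence any induction on $n$ whose propagating statement is of this strong form necessarily breaks at $n=12$, independently of how you handle interior extremal rays of $\overline{\mathrm{NE}}(\M_{0,n})$; proving Conjecture \ref{C0n} for all $n$ would require a different strengthening. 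Likewise, the known low-$n$ cases ($n\leq 6$ by Fontanari--Gibney-type results, $n=7$ by Larsen, $n=8$ by Theorem \ref{n=8}) are proved via effective-boundary decompositions, not by exhibiting a generating list of nef divisors of forgetful/contraction/conformal-block type, so the base cases do not furnish the generator list your strong form needs. What the paper actually extracts from its partial results is Theorem \ref{T:strong-sym} and its corollary: the symmetric F-conjecture up to $n\leq 44$ and the F-conjecture for $\M_g$, $g\leq 44$ --- not Conjecture \ref{C0n} in general.
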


\begin{conjecture}[Symmetric F-conjecture]\label{Csym0n}
An $S_n$-symmetric divisor on $\M_{0,n}$ is nef if and only if it is F-nef.
\end{conjecture}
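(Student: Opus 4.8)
The only implication that requires an argument is that an $S_n$-symmetric F-nef divisor on $\M_{0,n}$ is nef, and I would approach it through an explicit comparison of the two cones inside the symmetric N\'eron--Severi space $N^1(\M_{0,n})^{S_n}_{\RR}$. This space has dimension $\nhalf-1$, with convenient basis the symmetric boundary classes $B_2,\dots,B_{\nhalf}$, $B_k=\sum_{|S|=k}\delta_{0,S}$. In these coordinates the $S_n$-symmetric F-curves are indexed by partitions $n=a_1+a_2+a_3+a_4$ into four positive parts, and each pairing $B_k\cdot F_{(a_1,a_2,a_3,a_4)}$ is a completely explicit integer (governed by how the $k$-subsets of $\{1,\dots,n\}$ meet the four blocks), so the symmetric F-nef cone $F_n^{\sym}\subset N^1(\M_{0,n})^{S_n}_{\RR}$ is a rational polyhedral cone one can write down and whose extremal rays can be enumerated. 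Since every nef divisor is F-nef, Conjecture \ref{Csym0n} is equivalent to the statement that each extremal ray of $F_n^{\sym}$ is spanned by a nef class; and for a rational polyhedral cone it suffices to prove that each such extremal $D$ is \emph{semiample}.

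To establish semiampleness of an extremal $D$ I would look for a morphism $\M_{0,n}\to X$ contracting exactly the curves $C$ with $D\cdot C=0$. The natural candidates for $X$ are the Hassett reduction spaces $\M_{0,\mathcal A}$ for $S_n$-symmetric weight data $\mathcal A$, the Kapranov and GIT models $(\PP^1)^n\gitq\PGL_2$ with symmetric linearizations, and compositions of such maps with the partial symmetrization quotients $\M_{0,n}\to\M_{0,n}/(S_{a_1}\times\cdots\times S_{a_4})$; for any such $X$ the distinguished polarization is ample by construction, so its pullback is semiample and its nefness is already detected by F-curves, and the remaining work is the combinatorial matching of the cone of pullbacks of symmetric polarizations against the facets of $F_n^{\sym}$. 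I would organize this inductively in $n$: restricting an F-nef symmetric $D$ to a boundary divisor $\M_{0,k+1}\times\M_{0,n-k+1}$ produces an F-nef class there (symmetric only under $S_k\times S_{n-k}$, the two gluing points being fixed), which one wants to know is nef by induction; $D$ would then be nef on all of the boundary, after which a positivity input on the interior --- that a general point of $\M_{0,n}$ lies on no complete curve disjoint from a suitable big boundary divisor, or a Seshadri-type estimate --- would upgrade nef-on-the-boundary to nef.

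The obstacle is twofold, and it is exactly where the subject currently stands. First, the induction does not formally close: restriction to the boundary yields divisors with only \emph{partial} symmetry, so the inductive hypothesis one actually needs is the F-conjecture for $S_k\times S_{n-k}$-symmetric (or outright non-symmetric) divisors on smaller genus-$0$ moduli, which is itself known only in small cases --- this is precisely the gap whose closure, via the equivalence of the symmetric and non-symmetric F-conjectures, is the point of the present paper. Second, even granting the boundary step, at each $n$ there remain finitely many extremal rays of $F_n^{\sym}$ that are \emph{not} pulled back from the boundary --- the analogues of the level divisors $\mathbb B_{n,k}$ --- for which no uniform semiample model is known; one is then forced either into an $n$-by-$n$ verification (the route by which the symmetric conjecture is presently established only for moderate $n$, in the work of Gibney and of Keel--McKernan) or into a genuinely new structural input, such as an intrinsic positivity criterion on $\M_{0,n}/S_n$ or a realization of the $\mathbb B_{n,k}$ as conformal-blocks bundles with symmetric level data whose global generation can be proved, which would make the F-inequalities manifestly sufficient. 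My expectation is that all of the genuine difficulty of Conjecture \ref{Csym0n} concentrates in proving nefness of those few extremal divisors at each $n$.
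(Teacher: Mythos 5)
You have chosen to comment on a \emph{conjecture} rather than a theorem, and your proposal correctly stops short of claiming a proof --- but the comparison to the paper is therefore one of strategy, not correctness, and on strategy your account diverges substantially from what the paper actually does. First, the paper does not prove Conjecture \ref{Csym0n}; it remains open. The paper's substantive contributions to it are (a) Theorem \ref{T:equivalence}, the equivalence of Conjectures \ref{C0n} and \ref{Csym0n}, and (b) the partial result for $g\leq 44$, obtained by a route entirely different from anything you describe. You suggest proving semiampleness of symmetric F-extremal rays by exhibiting morphisms to Hassett spaces, symmetric GIT quotients, or conformal-blocks models, and you propose an induction over the boundary. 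The paper instead proves nefness of a symmetric F-nef $D$ via \emph{stratal effectivity}: Proposition \ref{P:main} (the Ascent of Effectivity Lemma \ref{L:lemma}) shows, using only the F-inequality \eqref{E:F-nef} on the associated symmetric function $f\colon\ZZ_n\to\QQ$ and the Effective Boundary Lemma \ref{L:boundary}, that to check $att_\lambda^*D$ is effective boundary for every partition $\lambda\vdash n$, it suffices to check it for \emph{strict} partitions. Since a strict partition of $n$ into $k$ parts forces $n\geq(k+1)(k+2)/2$, this reduces the problem for all $n\leq(k+1)(k+2)/2-1$ to the Strong F-conjecture \ref{Cstrong} on $\M_{0,m}$, $m\leq k$, and the latter is established for $m\leq 8$ by a PBD/linear-programming computation, not by exhibiting semiample models.

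There is also a conceptual slip in how you read the paper's main theorem into your sketch. You say the inductive gap (boundary restrictions are only $S_k\times S_{n-k}$-symmetric) is ``precisely the gap whose closure, via the equivalence of the symmetric and non-symmetric F-conjectures, is the point of the present paper.'' Theorem \ref{T:equivalence} does not close that gap --- it runs the other way. The nontrivial content is that the symmetric conjecture (for all $n$) \emph{implies} the non-symmetric one; the converse is immediate. The theorem therefore dashes, rather than rescues, the hope that the symmetric cone might be more tractable than the full F-cone --- any proof of Conjecture \ref{Csym0n} would automatically prove Conjecture \ref{C0n} and hence Conjecture \ref{Cgn}. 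Moreover, the remark after Theorem \ref{thm:main theorem} shows (feeding in Pixton's example) that symmetric F-nef divisors on $\M_{0,\mathbf A}$ need not be stratally effective boundary, so even the stratal-effectivity route the paper uses for small $n$ cannot work uniformly. So while your instincts about where the difficulty lies are reasonable, the specific mechanisms you name (Hassett contractions, GIT polarizations, conformal blocks) play no role in the paper, and the one connection you draw to the paper's main theorem is inverted.
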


\begin{conjecture}[Strong F-conjecture {\cite[Question 0.13]{GKM}}]\label{Cstrong}
Every F-nef divisor on $\M_{0,n}$ is $\QQ$-linearly equivalent to an effective combination of boundary divisors.
\end{conjecture}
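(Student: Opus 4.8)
The plan is to induct on $n$: given an F-nef divisor $D$ on $\M_{0,n}$, I would peel off an explicit effective boundary combination read from the boundary restrictions of $D$, reduce to a class that is trivial on the whole boundary, and then show such a class must vanish; the small cases — in particular $n=7$ and $n=8$ — get finished off by an explicit polyhedral certification that exploits the $S_n$-symmetry.

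\emph{Reformulation.} By Keel's theorem $N^1(\M_{0,n})_{\QQ}$ is spanned by the boundary classes $\delta_{0,S}$, so the effective boundary combinations form a full-dimensional rational polyhedral cone, while the F-nef divisors form the rational polyhedral cone cut out by the finitely many (up to $S_n$) F-curve inequalities. The assertion is precisely that the F-nef cone is contained in the boundary cone, equivalently that every extremal ray of the F-nef cone is a nonnegative rational combination of boundary classes — a finite list of checks once one passes to $S_n$-orbits of extremal rays.

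\emph{Inductive step.} Let $n\ge 5$ and let $D$ be F-nef. For each boundary divisor $\delta_{0,S}\cong\M_{0,|S|+1}\times\M_{0,n-|S|+1}$, every F-curve of the product is an F-curve of $\M_{0,n}$, so $D|_{\delta_{0,S}}$ is F-nef and, by the inductive hypothesis applied to the two factors, is an effective combination of boundary classes, each of which is the restriction of a boundary class of $\M_{0,n}$. The task is then to choose a single effective boundary combination $B$ on $\M_{0,n}$ whose restriction to every $\delta_{0,S}$ matches that of $D$; then $D-B$ is trivial on all of the boundary. One closes the argument by showing that a class on $\M_{0,n}$ that restricts to zero on every boundary divisor and is a limit of F-nef classes must be zero — a linear-algebra statement about Keel's presentation of $N^1$ and its pairing with F-curves that meet the interior. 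This yields $D=B$, an effective boundary sum.

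\emph{Base cases and the main difficulty.} For $n\le 6$ the statement is classical. For $n=7$ and $n=8$ I would use the equivalence of the symmetric and non-symmetric F-conjectures established in this paper to organize the enumeration of F-curves and of extremal rays into $S_n$-orbits, and then exhibit, for each orbit representative, an explicit nonnegative-rational expression as a boundary sum (a Farkas-type certificate), arranging the computation so that the $n=7$ output feeds the $n=8$ induction. I expect the genuine obstacle to be the synthesis step inside the induction: proving that the boundary restrictions of $D$ are mutually compatible and lift to one effective class $B$, and that the leftover interior class vanishes. It is exactly the threat of F-nef classes that are new at level $n$ and invisible on the boundary that makes the explicit verification necessary and that dictates how far — here, up to $n=8$ — it must be carried out.
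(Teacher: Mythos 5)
There is a fundamental problem: the statement you are trying to prove is a \emph{conjecture} in this paper, and in fact it is \emph{false} in the generality you attack. As recalled in the introduction, Pixton exhibited, for every $n\geq 12$, an F-nef (indeed nef) divisor on $\M_{0,n}$ that is not $\QQ$-linearly equivalent to an effective combination of boundary divisors (an index-$2$ PBD coming from the $(11,5,2)$-biplane). So no induction on $n$ can establish the conjecture for all $n$, and your inductive step must break somewhere. It breaks exactly at the point you flag as the ``main difficulty'': the synthesis step in which the effective boundary expressions of the restrictions $D|_{\delta_{0,S}}$ are lifted to a single effective boundary class $B$ on $\M_{0,n}$ with $D-B$ trivial on the boundary, followed by the claim that such a leftover class vanishes. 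Pixton's divisor on $\M_{0,12}$ is nef, so all of its boundary restrictions are F-nef and (granting the cases $n\leq 11$) effective boundary, yet the divisor itself is not an effective boundary sum; hence no compatibility-and-lifting argument of the kind you sketch can exist. Relatedly, the reformulation step is off: effectivity as a boundary sum is a statement about the boundary cone, which is strictly smaller than the F-nef cone in general, and ``extremal rays of the F-nef cone lie in the boundary cone'' is precisely what fails for $n\geq 12$.

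What the paper actually proves is only the case $n=8$ (Theorem \ref{n=8}, with a new proof for $n=7$), and by a quite different, dual route: the dual of the effective boundary cone is identified with the cone $\PBDcone(n)$ of effective pairwise balanced designs on $[n-1]$, and the assertion becomes that $\PBDcone(8)$ is contained in the cone spanned by F-curve classes. Since enumerating extremal effective PBDs is infeasible (already $59448$ orbits for $n=7$), the paper uses a linear-programming reduction via the notion of \emph{critical} supports: a witness to non-containment can be chosen with non-critical support, and a computer check shows every subset either is critical or supports no effective PBD. Note also that Theorem \ref{T:equivalence} (equivalence of symmetric and non-symmetric F-conjectures) concerns F-nefness versus nefness and plays no role in the proof of the $n=8$ case, so it cannot be used the way you propose to organize the $n=7,8$ verifications. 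If you want a correct target for your energy, aim at the $n\leq 8$ statement via the dual PBD formulation, or at Theorem \ref{T:strong-sym}, which is where the low-$n$ strong statements get leveraged into symmetric F-conjecture results for large $n$.
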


A breakthrough Bridge Theorem of Gibney-Keel-Morrison \cite[Theorem (0.3)]{GKM}
reduces Conjecture \ref{Cgn} for $\M_{g,n}$ to the $S_g$-symmetric version of Conjecture \ref{C0n} for $\M_{0,g+n}/S_g$, 
so that Conjecture \ref{Cgn} is true for all $g$ and $n$ if and only if Conjecture \ref{C0n} is true for all $n$ (if the characteristic is not $2$).

A standard restriction to the boundary argument \cite[Effective Dichotomy, p.39]{ian-mori}, shows that Conjecture \ref{Cstrong} for all $n\leq N$ implies Conjecture \ref{C0n} for all $n\leq N$.
However while Conjecture \ref{Cstrong} is known for $n=7$ by \cite{larsen} and $n\leq 6$ by \cite{FG}, 
Pixton found a counterexample for all $n\geq 12$ \cite{pixton}.

Because symmetric F-cones are much more manageable, and symmetric effective and F-nef cones are much simpler, one could hope that the symmetric F-conjecture in genus $0$ is weaker than the non-symmetric one. 
Our first result is that this is not so. The symmetric F-conjecture \ref{Csym0n} is in fact equivalent to the general F-conjecture \ref{Cgn} in the following sense:
\begin{thm}\label{T:equivalence}
Conjecture \ref{C0n} is true for all $n$ if and only if Conjecture \ref{Csym0n} is true for all $n$.
\end{thm}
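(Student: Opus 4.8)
The plan is to reduce the non-symmetric F-conjecture for a fixed $n$ to the symmetric F-conjecture for a larger number of marked points, exploiting the standard trick of "rigidifying" a divisor by adding extra marked points and symmetrizing. The forward implication (Conjecture \ref{C0n} for all $n$ implies Conjecture \ref{Csym0n} for all $n$) is trivial, since a symmetric F-nef divisor is in particular F-nef. So the content is the reverse direction: assuming the symmetric F-conjecture in all degrees, I want to deduce that an arbitrary F-nef divisor $D$ on $\M_{0,n}$ is nef.

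First I would recall the two main tools for moving between $\M_{0,n}$ and $\M_{0,N}$ with $N>n$: the forgetful morphisms $\pi\colon \M_{0,N}\to\M_{0,n}$, and more usefully the maps that arise from attaching fixed tails or from the boundary. The key construction is this: given $D$ on $\M_{0,n}$, one builds a divisor $\widetilde D$ on $\M_{0,N}$ for a suitable $N$ (for instance $N = kn$ or $N$ obtained by "cloning" each of the $n$ marked points into a cluster of points), in such a way that (a) $\widetilde D$ can be made $S_N$-symmetric, or at least symmetric enough that the symmetric F-conjecture applies after a further averaging over the $S_N$-orbit; (b) F-nefness of $D$ implies F-nefness of $\widetilde D$, because F-curves on $\M_{0,N}$ push forward to effective combinations of F-curves (or constant maps) on $\M_{0,n}$; and (c) nefness of $\widetilde D$ implies nefness of $D$, because $D$ can be recovered as a restriction of $\widetilde D$ to a copy of $\M_{0,n}$ sitting inside $\M_{0,N}$ as a boundary stratum (gluing $n-?$ of the clusters down to points with fixed tails), and restrictions of nef divisors to subvarieties are nef. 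The heart of the matter is arranging (a) and (b) simultaneously — symmetrization tends to destroy the relationship with $D$, so one needs the cloning to be compatible with the $S_n$-action permuting the clusters and to check that the averaged divisor still dominates (or equals, up to boundary) the pullback of $D$.

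The key steps, in order, are: (1) fix $D$ F-nef on $\M_{0,n}$ and choose the cloning/embedding data, producing $\iota\colon \M_{0,n}\hookrightarrow \M_{0,N}$ as a boundary stratum and a divisor class $\widetilde D$ on $\M_{0,N}$ with $\iota^*\widetilde D = D$ (modulo a controlled effective boundary correction that one handles separately, e.g. by an induction on $n$ using the known cases or by absorbing it into the F-nef cone description); (2) show $\widetilde D$ is F-nef by the restriction-of-F-curves computation; (3) replace $\widetilde D$ by its $S_N$-average $\widetilde D^{\sym}$, check it is still F-nef (immediate, F-nefness is preserved by averaging) and still restricts to something comparing favorably to $D$ — here is where one exploits that $D$ itself may first be replaced by its $S_n$-orbit sum if necessary, or that the construction is $S_n$-equivariant so the relevant strata are permuted among themselves; (4) apply Conjecture \ref{Csym0n} in degree $N$ to conclude $\widetilde D^{\sym}$ is nef; (5) restrict along $\iota$ (and along the $S_n$-translates of $\iota$, then sum) to conclude $D$, or a positive multiple of $D$ plus effective boundary, is nef, and finally peel off the boundary correction to get that $D$ is nef.

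The main obstacle I expect is step (3)–(5): controlling exactly what $\widetilde D^{\sym}$ restricts to. Symmetrizing over the full $S_N$ mixes the clusters with each other and with the tails, so $\iota^*\widetilde D^{\sym}$ is generally \emph{not} a multiple of $D$ but rather $D$ plus a large, possibly hard-to-identify, effective (or merely F-nef) boundary term coming from all the "wrong" strata in the symmetrization. The real work is a bookkeeping argument showing this error term is an effective combination of boundary divisors on $\M_{0,n}$ — or, more cleverly, setting up the cloning so that the error is itself manifestly nef by a lower-$n$ instance of the conjecture (hence an induction on $n$), with the base cases $n\le 6$ or $n\le 7$ supplied by \cite{FG, larsen}. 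Getting the combinatorics of which strata survive symmetrization to close the induction cleanly is the crux; everything else (F-nefness being preserved under pushforward of F-curves and under averaging, nefness being preserved under restriction to boundary strata) is formal.
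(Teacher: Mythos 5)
Your high-level strategy is the right one, and it matches the paper's: realize the given F-nef divisor $D$ on $\M_{0,n}$ as the pullback of a \emph{symmetric} F-nef divisor on a larger $\M_{0,N}$ along an attaching map $\M_{0,n}\hookrightarrow \M_{0,N}$ (a boundary stratum obtained by attaching fixed tails at the marked points), then invoke Conjecture \ref{Csym0n} on $\M_{0,N}$ and restrict. But the mechanism you propose for producing the symmetric divisor --- extend $D$ to some $\widetilde D$ on $\M_{0,N}$ and then average over $S_N$ --- is exactly the step that does not work, and you correctly flag it as the crux without resolving it. There are two concrete problems. First, the $S_N$-average has an uncontrolled restriction, and your fallback, ``peel off the boundary correction,'' is not a valid move: if $\iota^*\widetilde D^{\sym}=D+E$ with $E$ effective boundary (or even nef), nefness of $D+E$ does not imply nefness of $D$, since neither effective nor nef classes can be subtracted from a nef class while preserving nefness. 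So even a complete bookkeeping of the error term would not close the argument as you set it up. Second, no induction on $n$ with base cases $n\le 7$ can substitute here, because the error term lives on $\M_{0,n}$ itself, not on a smaller moduli space.

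The paper sidesteps averaging entirely and constructs the symmetric divisor directly. Choose distinct primes $p_1,\dots,p_{n-1}$, set $\Gamma=\ZZ_{p_1\cdots p_{n-1}}$, and transport the F-nef function $f$ on $\Sigma_n$ defining $D$ to $\wt f$ on $\Gamma$ by $\wt f(x)=f(\supp(x))$, where $\supp(x)\subset[n-1]$ records which $p_i$-components of $x$ are nonzero. The function $\wt f$ need not be F-nef on $\Gamma$, so one adds $c\cdot ST$ for $c\gg 0$, where $ST$ is the ``supertotal'' function of \eqref{E:supertotal}; the Key Lemma \ref{L:supertotal-lemma} says $\langle ST,(x,y,z)\rangle>0$ except precisely when the supports form a genuine F-curve of $\M_{0,n}$, on which triples $\wt f$ is already nonnegative by F-nefness of $f$ --- this is what makes $F=\wt f+c\,ST$ F-nef (Lemma \ref{L:main-lemma}) and hence yields an $S_{\mathbf A}$-symmetric F-nef divisor via \eqref{E:divisor-intro-symmetric}. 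Finally, the tail sizes $a_i$ are chosen by CRT as in \eqref{E:CRT} so that under $att_{(a_1,\dots,a_n)}$ the $\wt f$-part pulls back to $D$ on the nose and the $ST$-part pulls back to the zero class (it kills every F-curve of $\M_{0,n}$, and F-curves generate $N_1$). This exact identity $D=att^*\,\DD(\ZZ_{\mathbf N},F;(1,\dots,1))$, with no error term, is the missing idea in your proposal; without something playing the role of the prime/CRT encoding and the strictly-positive-off-the-stratum correction $ST$, your steps (3)--(5) cannot be completed.
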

By the Bridge Theorem, this immediately implies that Conjecture \ref{Cgn} reduces to Conjecture \ref{Csym0n}. That is, the general F-conjecture for $\M_{g,n}$
is implied by the symmetric F-conjecture in genus $0$:
\begin{cor}  
Conjecture \ref{Cgn} is true for all $g$ and $n$ if and only if Conjecture \ref{Csym0n} is true for all $n$ if and only if Conjecture \ref{Cgn} is true for all $\M_g$.
\end{cor}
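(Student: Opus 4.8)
The plan is to derive the Corollary as a purely formal consequence of Theorem~\ref{T:equivalence} and the Bridge Theorem of Gibney--Keel--Morrison \cite{GKM}, by closing a cycle of implications among three assertions: (a) Conjecture~\ref{Cgn} holds for all $g$ and $n$; (b) Conjecture~\ref{Csym0n} holds for all $n$; (c) Conjecture~\ref{Cgn} holds for $\M_g$ for all $g$. The implication (a)$\Rightarrow$(c) is immediate, since $\M_g=\M_{g,0}$ is the case $n=0$. For (c)$\Rightarrow$(b), I would apply the Bridge Theorem in the case $n=0$: it identifies the F-conjecture for $\M_g$ with the $S_g$-symmetric F-conjecture on $\M_{0,g}$, which is exactly the instance $n=g$ of Conjecture~\ref{Csym0n}, so letting $g$ vary yields (b). For (b)$\Rightarrow$(a), Theorem~\ref{T:equivalence} upgrades (b) to the statement that Conjecture~\ref{C0n} holds for all $n$, and the Bridge Theorem in its general form upgrades that to (a), since the $S_g$-symmetric F-conjecture on $\M_{0,g+n}$ is a special case of Conjecture~\ref{C0n}. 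Chaining these gives (a)$\Rightarrow$(c)$\Rightarrow$(b)$\Rightarrow$(a), so the three assertions are equivalent.

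In order, the steps I would carry out are the following. First, isolate the $n=0$ instance of the Bridge Theorem and check that its conclusion --- the $S_g$-symmetric F-conjecture for $\M_{0,g}/S_g$ --- is literally Conjecture~\ref{Csym0n} at $n=g$; this reduces to the standard facts that $\QQ$-line bundles on $\M_{0,g}/S_g$ correspond to $S_g$-invariant $\QQ$-line bundles on $\M_{0,g}$, that nefness is preserved in both directions along the finite quotient map, and that the one-dimensional boundary strata of $\M_{0,g}/S_g$ are precisely the images of the F-curves of $\M_{0,g}$, so that F-nefness corresponds as well. Second, record the two trivial specializations: (a)$\Rightarrow$(c), and ``Conjecture~\ref{C0n} for all $n$'' $\Rightarrow$ (a). Third, insert Theorem~\ref{T:equivalence} for the sole non-formal link, between the symmetric and the non-symmetric genus-zero F-conjectures. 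Assembling these three ingredients closes the cycle.

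Because every link in the cycle except the middle one is a specialization or a direct quotation of an already-established theorem, I do not expect any genuine obstacle within the Corollary itself: all of its content is inherited from Theorem~\ref{T:equivalence}. The only points demanding care are bookkeeping ones --- quoting the Bridge Theorem in precisely the right generality, in particular retaining its hypothesis that the characteristic is different from $2$ (which the Corollary then inherits), and ensuring that the symmetric genus-zero conjecture furnished by the Bridge Theorem is matched against Conjecture~\ref{Csym0n} uniformly in $n$, rather than for one value of $n$ at a time.
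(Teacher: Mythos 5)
Your argument is correct and is essentially the same one the paper has in mind: the corollary is a formal consequence of Theorem~\ref{T:equivalence} together with the Gibney--Keel--Morrison Bridge Theorem, and you close the three-way equivalence by chaining (a)$\Rightarrow$(c)$\Rightarrow$(b)$\Rightarrow$(a), where (c)$\Leftrightarrow$(b) is the $n=0$ instance of the Bridge Theorem and (b)$\Rightarrow$(a) combines Theorem~\ref{T:equivalence} with the general Bridge Theorem. You also correctly flag the bookkeeping points the paper leaves implicit --- that the Bridge Theorem must be invoked in its biconditional form at $n=0$ and that the characteristic-$\neq 2$ hypothesis is inherited from it.
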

This resolves the problem posed in \cite[Remark, p.275]{GKM}.

Our second result is the proof of Conjecture \ref{Cstrong} (and hence \ref{C0n}) for $n=8$:
\begin{thm}[{Strong F-conjecture for $\M_{0,8}$}]
\label{n=8} 
Every $F$-nef divisor on $\M_{0,8}$ is an effective combination of boundary. In particular, the F-conjecture is true for $\M_{0,8}$.
\end{thm}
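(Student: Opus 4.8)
The plan is to reduce the Strong F-conjecture for $\M_{0,8}$ to a finite computation and then to carry that computation out in a way that is organized enough to be verifiable. Concretely, the F-nef cone $\mathrm{F\text{-}nef}(\M_{0,8})$ is a rational polyhedral cone in $N^1(\M_{0,8})_\QQ$ cut out by the finitely many F-curve inequalities, and the effective boundary cone $\mathrm{Bd}(\M_{0,8})$ is the rational polyhedral cone spanned by the boundary divisor classes $\delta_{0,S}$. The statement to prove is the inclusion $\mathrm{F\text{-}nef}(\M_{0,8}) \subseteq \mathrm{Bd}(\M_{0,8})$ of these two explicit cones. Since both are rational polyhedral, this is in principle decidable: compute the extremal rays of $\mathrm{F\text{-}nef}(\M_{0,8})$ and check that each lies in $\mathrm{Bd}(\M_{0,8})$ (equivalently, check every facet inequality of $\mathrm{Bd}$ is satisfied on all F-nef classes). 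The difficulty is purely one of scale: $\dim N^1(\M_{0,8})$ is in the hundreds after accounting for Keel's relations, the number of F-curves is large, and a naive vertex enumeration is infeasible, so the real work is to exploit symmetry and structure to make this tractable.

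First I would set up the linear-algebraic framework precisely: fix a basis for $N^1(\M_{0,8})_\QQ$ (for instance, using the Kapranov/Keel presentation, or the basis of boundary classes modulo Keel relations), write down the complete list of F-curve classes and hence the defining inequalities of $\mathrm{F\text{-}nef}$, and write down the generators $\delta_{0,S}$ of $\mathrm{Bd}$. Then I would use the $S_8$-action to decompose the problem. The key reduction is that both cones are $S_8$-stable, so it suffices to show that the $S_8$-orbit of a fundamental domain of $\mathrm{F\text{-}nef}$ lands in $\mathrm{Bd}$; more usefully, one can stratify by which F-curve inequalities are tight and, for each combinatorial type, reduce modulo $S_8$. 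A cleaner route, and the one I would pursue, is an inductive/restriction argument: an F-nef divisor $D$ on $\M_{0,8}$ restricts to an F-nef divisor on every boundary divisor $\M_{0,S}\times\M_{0,S^c}$, which is a product of spaces $\M_{0,k}$ with $k\le 7$; by the Strong F-conjecture for $n\le 7$ (known by Larsen and by Fakhruddin--Gibney, cited in the excerpt), these restrictions are effective sums of boundary. One then writes $D = \sum a_S \delta_{0,S} + R$ where the boundary coefficients $a_S$ are chosen (via the Effective Dichotomy / restriction-to-boundary bookkeeping) so that $R$ restricts to zero on every boundary divisor, and must show $R$ is effective — which, because $R$ is supported away from the boundary in the relevant sense, forces $R$ to be a nonnegative multiple of a fixed ``interior'' class or of $0$.

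The heart of the argument is therefore the following finite verification: after subtracting an explicit boundary correction so that the remainder $R$ is F-nef and restricts trivially to every boundary stratum, show $R$ lies in $\mathrm{Bd}$. I would handle this by a direct polyhedral computation, organized $S_8$-equivariantly: enumerate the (finitely many, up to symmetry) faces of $\mathrm{F\text{-}nef}(\M_{0,8})$ on which a maximal set of compatible F-curve equalities hold, compute a generator of each such ray, and express it as a nonnegative combination of the $\delta_{0,S}$ by solving a linear feasibility problem. Equivalently one computes the facets of $\mathrm{Bd}(\M_{0,8})$ and verifies each is F-nonnegative. This is a substantial but finite convex-geometry computation; I would carry it out with exact rational arithmetic and present the certificate (the list of extremal rays of $\mathrm{F\text{-}nef}$ together with explicit effective boundary expressions) in a verifiable form, perhaps tabulated by $S_8$-orbit.

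The main obstacle I anticipate is the combinatorial explosion: even after using $S_8$-symmetry, the number of faces of $\mathrm{F\text{-}nef}(\M_{0,8})$ to analyze and the dimension of the ambient space make a brute-force vertex enumeration borderline, so the argument must be structured cleverly — most likely by peeling off boundary contributions using the $n\le 7$ cases until the remaining ``new'' extremal rays form a small, symmetry-reduced list that can be checked by hand or by a short, auditable computation. Getting that reduction tight enough that the residual computation is genuinely feasible is the crux, and is where most of the paper's effort in this theorem will go.
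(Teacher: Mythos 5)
Your high-level framing is right: the statement is an inclusion of two explicit rational polyhedral cones, and the proof must ultimately reduce to a finite, $S_8$-equivariant verification. But the concrete plan you propose — enumerate the extremal rays of the F-nef cone (equivalently, after dualizing, of the cone dual to effective boundary) and check each one — is exactly the naive computation that the paper explicitly rules out as infeasible. The paper reports that already for $n=7$ there are $59448$ extremal rays up to $S_7$-symmetry, and the enumeration did not complete for $n=8$. So the crux of the theorem is precisely the device that lets one avoid full vertex enumeration, and that device is absent from your proposal.

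The paper dualizes: the Strong F-conjecture for $\M_{0,8}$ becomes the claim that the cone of effective pairwise balanced designs (PBDs) on $\{1,\dots,7\}$ is contained in the cone generated by F-curves. The key new idea is the notion of a \emph{critical} subset $J$ of boundary indices: $J$ is critical if some nonzero effective combination of F-curves pairs nonpositively with every boundary divisor outside $J$. Their Proposition shows (via a minimality argument on a witness pair) that if the inclusion of cones fails, there must be a violating extremal ray whose \emph{support} is \emph{not} critical. This converts the task into checking, for each subset $J$ of boundary classes (up to $S_8$-symmetry), that $J$ is either critical or does not support an effective PBD at all — a collection of small LP feasibility checks rather than a vertex enumeration. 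That is the trick that makes the computation finish.

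Your restriction-to-boundary heuristic — write $D = \sum a_S\delta_{0,S} + R$ with $R$ restricting to zero on all boundary, then argue $R$ is effective — is not a proof as stated and is not the paper's argument. The issue is that you cannot in general choose a single effective boundary correction making all restrictions vanish simultaneously; the failure of precisely this kind of patching is what Pixton's $n=12$ counterexample exhibits. The known $n\le 7$ cases are used elsewhere in the paper (for Theorem 1.5 via the ascent-of-effectivity lemma), but the $n=8$ proof is a self-contained dual computation, organized by supports and criticality. In short: right target, right polyhedral setup, but you are missing the dualization to PBDs and, most importantly, the critical-subset reduction that makes the finite check actually finite in practice.
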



Our final result is:
\begin{thm}\label{T:strong-sym}
Suppose the strong F-conjecture \ref{Cstrong} holds for $\M_{0,m}$ for all $m\leq k$. Then the symmetric F-conjecture \ref{Csym0n} is true for $\M_{0,n}$ for all $n\leq (k+1)(k+2)/2-1$. 
\end{thm}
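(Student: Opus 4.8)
The plan is to leverage the Bridge Theorem philosophy in reverse: a symmetric F-nef divisor on $\M_{0,n}$ should be built, via pullback along suitable forgetful/gluing maps, from F-nef divisors on $\M_{0,m}$ with $m\leq k$, where by hypothesis the strong F-conjecture supplies an effective boundary expression. First I would recall the structure of the symmetric nef cone: by standard results on $\M_{0,n}$ (e.g.\ via the Kapranov/Keel presentation), an $S_n$-symmetric divisor class is a linear combination of the symmetrized boundary classes $B_2,\dots,B_{\nhalf}$, where $B_i$ is the sum of all boundary divisors $\delta_{0,S}$ with $|S|=i$. The F-curve inequalities for symmetric classes then become a bounded system of linear inequalities in the coefficients, independent of fine combinatorial data, so establishing symmetric nefness reduces to showing every extremal ray of the symmetric F-cone is effective (a combination of boundary).

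The key step is a \emph{dimension-counting / reduction} argument explaining the bound $n\leq (k+1)(k+2)/2-1$. I expect the mechanism to be: given a symmetric F-nef divisor $D$ on $\M_{0,n}$, one averages over $S_n$ the pullbacks of boundary-expressed divisors from $\M_{0,m}$ under the $\binom{n}{?}$ many forgetful-type maps, and the combinatorics of which symmetric classes $B_i$ arise as such averages is governed by a triangular-number constraint. Concretely, the number of ``independent'' symmetric boundary classes one must hit grows linearly in $n$, while the span produced from all $\M_{0,m}$ with $m\leq k$ via symmetrized pullbacks has a fixed shape; matching these forces $n-1\leq (k+1)(k+2)/2-1$, i.e.\ $n\leq k(k+3)/2$... and one checks the sharp form is exactly $(k+1)(k+2)/2-1$. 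I would make this precise by identifying the symmetric Néron–Severi space of $\M_{0,n}$ with a space of polynomials (or with $\QQ^{\nhalf-1}$) and writing the cone of F-nef symmetric classes explicitly, then exhibiting generators of its dual as effective pushforwards/pullbacks of the strong-F-conjecture data on small $m$.

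The main obstacle will be this matching step: proving that the effective boundary cone pulled back from all $\M_{0,m}$, $m\leq k$, and then $S_n$-symmetrized, actually \emph{contains} (not merely meets) the symmetric F-nef cone for all $n$ in the stated range. This requires both (a) a clean description of the extremal rays of the symmetric F-nef cone of $\M_{0,n}$ — presumably finitely many families with a uniform description in $n$ — and (b) for each such ray, an explicit certificate: an effective boundary divisor, F-equivalent to it, obtained by symmetrizing a class that is effective on some $\M_{0,m}$ by the strong F-conjecture. I anticipate the induction will be on $n$, peeling off one point via a forgetful map $\pi\colon \M_{0,n}\to\M_{0,n-1}$, writing $D = \pi^*(\text{symmetric F-nef on }\M_{0,n-1}) + (\text{correction supported on }\psi\text{-classes and boundary})$, and controlling the correction term — the range $n\leq (k+1)(k+2)/2-1$ being exactly where the correction stays effective. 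Verifying effectivity of that correction term, uniformly, is where the real work lies; everything else is bookkeeping in the symmetric Picard group.
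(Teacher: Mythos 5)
There is a genuine gap: your plan never identifies the actual mechanism behind the bound $n\leq (k+1)(k+2)/2-1$, and the mechanism you propose would not produce it. The paper's route is: a symmetric F-nef divisor $D$ is nef once it is \emph{stratally effective boundary}, i.e.\ once $att_{\lambda}^*D$ is an effective sum of boundary divisors for \emph{every} partition $\lambda\vdash n$ (restriction to boundary strata via \emph{attaching} maps, not pullback along forgetful maps; note also that merely writing $D$ itself as an effective combination of boundary, as in your first paragraph, does not imply nefness). The crucial new input is the Ascent of Effectivity Lemma (Lemma \ref{L:lemma}): if $\lambda$ has two equal parts, then effectivity of the restriction for the partition $\mu$ obtained by merging those two parts implies effectivity for $\lambda$; its proof combines the Effective Boundary Lemma \ref{L:boundary} (a weight-function reformulation of Keel's relations) with the F-inequality for the symmetric function $f$ representing $D$, by halving the weights attached to the merged part and setting $w(k-1,k)=f(\lambda_k)-\tfrac{1}{2}f(2\lambda_k)$. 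This reduces stratal effectivity to \emph{strict} partitions only (Proposition \ref{P:main}); since the least $n$ admitting a strict partition into $k+1$ parts is $1+2+\cdots+(k+1)=(k+1)(k+2)/2$, for $n\leq (k+1)(k+2)/2-1$ every required restriction lands on some $\M_{0,m}$ with $m\leq k$, where it is F-nef and hence, by the hypothesis of Conjecture \ref{Cstrong}, an effective boundary. The triangular number is thus a fact about strict partitions, not about dimensions: the symmetric N\'eron--Severi space has dimension roughly $\lfloor n/2\rfloor$, so the dimension-matching heuristic you sketch cannot yield a quadratic bound, and indeed your own algebra for it does not close.

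Beyond that, your proposal concedes that the decisive step --- effectivity of the ``correction term'' in the forgetful-map induction, equivalently containment of the symmetric F-nef cone in the cone assembled from effective data on $\M_{0,m}$, $m\leq k$ --- is left unproven, and no candidate argument is offered; moreover forgetful maps point the wrong way for this purpose, since what is needed is the restriction of $D$ to boundary strata (attaching maps), for which the pullback of a symmetric class depends only on the partition of $n$ into part sizes. Without an analogue of the ascent lemma there is no reason why only partitions with at most $k$ parts should suffice, and that is precisely where the hypothesis on $\M_{0,m}$ for $m\leq k$ enters; this missing reduction is the heart of the theorem.
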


In combination with Theorem \ref{n=8}, this implies:
\begin{cor} Suppose $g\leq 44$.
The symmetric $F$-conjecture holds for $\M_{0,\, g}$ in any characteristic, and for $\M_g$ 
in any characteristic except $2$.
\end{cor}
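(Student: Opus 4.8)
The plan is to combine the paper's two main inputs: feed Theorem~\ref{n=8}, together with the already-known small cases of the Strong F-conjecture, into Theorem~\ref{T:strong-sym} to obtain the symmetric F-conjecture in as wide a range of $n$ as possible, and then pass from genus $0$ to $\M_g$ via the Bridge Theorem.

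First I would record that Conjecture~\ref{Cstrong} holds for $\M_{0,m}$ for every $m\leq 8$: for $m\leq 6$ by \cite{FG}, for $m=7$ by \cite{larsen} (and again by the alternative argument of this paper), and for $m=8$ by Theorem~\ref{n=8}. This makes the hypothesis of Theorem~\ref{T:strong-sym} available with $k=8$. Since $(k+1)(k+2)/2-1 = 9\cdot 10/2-1 = 44$, Theorem~\ref{T:strong-sym} then yields the symmetric F-conjecture~\ref{Csym0n} for $\M_{0,n}$ for all $n\leq 44$. Because $\M_{0,n}$, its boundary divisors, its F-curves, and the intersection pairing among them are all defined over $\ZZ$ and are stable under base change, this conclusion does not depend on the characteristic; in particular the symmetric F-conjecture holds for $\M_{0,g}$ for every $g\leq 44$ over an arbitrary field, which is the first assertion of the corollary.

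For the second assertion I would apply the Bridge Theorem of Gibney--Keel--Morrison \cite[Theorem~(0.3)]{GKM}, which in the case of no marked points reduces the F-conjecture for $\M_g$ to the $S_g$-symmetric F-conjecture for $\M_{0,g}$ (that is, Conjecture~\ref{Csym0n} with $n=g$), provided the characteristic is not $2$. Combined with the previous paragraph, this establishes Conjecture~\ref{Cgn} for $\M_g$ for all $g\leq 44$ in every characteristic other than $2$.

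I do not anticipate a real obstacle: all of the mathematical content is already contained in Theorems~\ref{n=8} and~\ref{T:strong-sym}, and the corollary is simply their combination. The only steps requiring care are checking that $k=8$ is admissible --- which is exactly the content of Theorem~\ref{n=8} together with the cited cases $m\leq 7$ --- and keeping track of the characteristic hypotheses, which are vacuous for the genus-$0$ statements but cost characteristic $2$ at the Bridge Theorem step.
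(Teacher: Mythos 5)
Your proof is correct and follows exactly the route the paper intends: the Strong F-conjecture for $m\leq 8$ (known cases plus Theorem~\ref{n=8}) feeds into Theorem~\ref{T:strong-sym} with $k=8$, giving $n\leq 44$, and then the Bridge Theorem handles $\M_g$ at the cost of excluding characteristic~$2$. This matches the paper's (implicit, one-line) derivation of the corollary.
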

The characteristic restriction in the second part is from \cite[Theorem (0.3)]{GKM}.

\subsection*{Acknowledgements} This work began (and Theorem \ref{T:equivalence} was proved) when the first author visited Universit\"at Wien in August 2023 with partial support from the Simons Foundation. The second author's research is supported by the the Consolidator Grant No. 101001159 ``Refined invariants in combinatorics, low-dimensional topology and geometry of moduli spaces'' of the European Research Council. Boston College Andromeda HPC Cluster
was used in verifying Theorem \ref{n=8}. 
\section{Generalities}

We fix $n\geq 4$ and set $[n-1]:=\{1,2,\ldots,n-1\}$. Let $\Sigma_n$ be the set of non-empty subsets of $[n-1]$, 
or equivalently, the set of $2$-part partitions of $[n]:=\{1,2,\dots,n\}$;
specifically, given a $2$-part partition $I\sqcup J=[n]$, we take $I\in \Sigma_n$ to be the part such that $n\notin I$.

\subsection{Divisor classes}
\label{S:divisors}

A $2$-part partition $I\sqcup J=[n]$ is \emph{proper} if $\vert I\vert, \vert J\vert \geq 2$. Proper partitions enumerate the boundary divisors, and 
the non-proper $2$-part partitions of $[n]$
correspond to the cotangent line bundles via a standard convention that $\Delta_{\{k\}, [n]\setminus \{k\}}:=-\psi_k$ for $k=1,\dots, n$:

\begin{enumerate}
\item If $S\in \Sigma_n$ satisfies $2\leq |S| \leq n-2$, then $S$ corresponds to a proper partition $[n]=S \sqcup [n]\setminus S$, and 
the standard boundary class $\Delta_{S}:=\Delta_{S,  [n]\setminus S}$,
\item If $S=\{k\}\in \Sigma_n$ is a singleton, then $S$ corresponds to a non-proper partition $[n]=\{k\} \sqcup [n]\setminus \{k\}$, and we formally set $\Delta_{S}:=-\psi_k$, the negative of the $k^{th}$ cotangent line bundle.
\item If $S=[n-1]\in \Sigma_n$, then $S$ corresponds to a non-proper partition $[n]=[n-1] \sqcup \{n-1\}$, and we formally set   to $\Delta_{[n-1]}:=-\psi_n$, the negative of the $n^{th}$ cotangent line bundle.
\end{enumerate}

Let $\Fun(\Sigma_n)$ be the set of $\BZ$-valued functions on $\Sigma_n$. Then elements of $\Pic(\M_{0,n})$ are represented by elements of $\Fun(\Sigma_n)$ modulo relations, 
which we now define:
    
For $i\in [n-1]$ and 
$S\in \Sigma_n$, let
\[
K_i(S) = 
\begin{cases}
	 1 & \text{if $i\in S$,}\\
	 0 & \text{otherwise.}
\end{cases} 
\]
We note that $K_i(S)$ is the Keel's relation 
\begin{equation}\label{keel-1}
\psi_i+\psi_n=\sum_{i\in I, n\in J} \Delta_{I, J},
\end{equation}
where the sum on the right is over proper partitions.

For $i,j\in [n-1]$ and $S \in \Sigma_n$, let
\[
K_{i,j}(S) =
\begin{cases}
	1 & \text{if $\{i,j\}\subset S$,}\\
	0 & \text{otherwise.}
\end{cases} 
\]
We note that $K_i(S)+K_j(S)-K_{i,j}(S)$ is the Keel's relation 
\begin{equation}\label{keel-2}
\psi_i+\psi_j=\sum_{i\in I, j\in J} \Delta_{I, J},
\end{equation}
where the sum on the right is over proper partitions.

We have $\Pic(\M_{0,n}) = \Fun(\Sigma_n) / \Span(K_i, K_{i,j}\;|\;i,j\in [n-1])$.

In terms of the generators $\{\Delta_{S}\mid  S\in \Sigma_n\}$  of $\Pic(\CM_{0,n})$, a function $f\in \Fun(\Sigma_n)$ corresponds to the divisor class
\begin{equation}\label{Df}
D(f):=-\sum_{S\in \Sigma_n} f(S)\Delta_{S}.
\end{equation}
(Note the negative sign.)

\subsection{Attaching maps and stratally effective divisors}
Given a partition of $[n]$ into $k$ parts 
\[
[n]=A_1\sqcup \cdots \sqcup A_k,
\] 
there is a closed immersion $att_{(A_1,\dots,A_k)}\colon \M_{0,k}\to \M_{0,n}$ where $att_{(A_1,\dots,A_k)}(C,\{p_i\}_{i=1}^{k})$ is obtained from $C$ by attaching 
a fixed $(|A_i|+1)$-pointed (marked by $A_i$ and an extra attaching point $q_i$)
rational curve by identifying $p_i$ with $q_i$, and stabilizing. 

A divisor class $D\in \Pic(\M_{0,n})$ is called \emph{stratally effective boundary} if the pullback 
$att_{(A_1,\dots,A_k)}^* D$ is an \emph{effective boundary} (that is, represented by an effective $\QQ$-linear combination of boundary divisors) on $\M_{0,k}$ for all partitions of $[n]$.

Note that if $D\in \Pic(\M_{0,n})^{S_n}$ is a symmetric divisor, then the class $att_{(A_1,\dots,A_k)}^* D$ depends only on the sizes $\lambda_i:=|A_i|$ of the parts, that is only on the integer partition 
$\lambda:=  (\lambda_1,\dots, \lambda_k) \vdash n$. In this case, we write $att_{\lambda}^* D$ or $att^*_{(\lambda_1,\dots, \lambda_k)} D$ to denote this class.

The F-curves on $\M_{0,n}$ are precisely the images of $\M_{0,4}$ under the attaching maps corresponding to all partitions of $[n]$ into $4$ parts.

By a standard argument \cite[Effective Dichotomy, p.39]{ian-mori}, a stratally effective boundary divisor is nef, so that the Strong F-conjecture \ref{Cstrong} can be reformulated 
as saying that every F-nef divisor is a stratally effective boundary.

\subsection{Curve classes}\label{S:curves}

The curve classes on $\M_{0,n}$ are represented by their pairing with the divisor classes. They are linear combinations of F-curves: 
For sets $X, Y, Z\in \Sigma_n$ which pairwise do not intersect, we have an F-curve class $c_{X,Y,Z}$ corresponding to the partition of $[n]$ into $4$ parts
\[
[n]=X\sqcup Y\sqcup Z\sqcup \bigl([n] \setminus (X\cup Y \cup Z)\bigr).
\] 
For $f\in \Fun(\Sigma_n)$ defining the divisor $D(f)$, the intersection pairing $D(f)\cdot c_{X,Y,Z}$ is given by 
\[
\langle f, c_{X,Y,Z} \rangle := f(X) + f(Y) + f(Z) + f(X\cup Y\cup Z) - f(X\cup Y) - f(X\cup Z) - f(Y\cup Z).
\]
Notice that
\[
\langle K_i , c_{X,Y,Z} \rangle = \langle K_{i,j}, c_{X,Y,Z}  \rangle = 0.
\]

\subsection{F-nef functions}
A function $f\colon \Sigma_n \to \QQ$ is called F-nef if for all non-intersecting $X,Y,Z$ in $\Sigma_n$ we have
\[
\langle f, c_{X,Y,Z} \rangle \geq 0.
\]

Let $\Gamma$ be any abelian group. We say that a function $f\co \Gamma\to \QQ$ is symmetric if $f(x)=f(-x)$ for every $x\in \Gamma$. 
Then for every $n$-tuple $(d_1,\dots,d_n)$ of elements
of $\Gamma$ such that $d_1+\cdots+d_n=0 \in \Gamma$,
we define a divisor $\DD \bigl(\Gamma, f; (d_1,\dots,d_n)\bigr)$ on $\M_{0,n}$ by the formula 
\begin{equation}\label{E:divisor-intro}
\DD \bigl(\Gamma, f; (d_1,\dots,d_n)\bigr):=\sum_{S\in \Sigma_n} f\Bigl(\sum_{i\in S} d_i\Bigr)\Delta_S=
\sum_{i=1}^n f(d_i) \psi_i -\sum_{I,J} f\Bigl(\sum_{i\in I} d_i \Bigr)\Delta_{I,J},
\end{equation}
where the last sum is over proper partitions.

For a symmetric function $f\colon \Gamma\to \QQ$ and $x,y,z\in \Gamma$, we formally define
\begin{equation}
\label{E:F-condition}
\langle f, (x,y,z)\rangle:=f(x)+f(y)+f(z)+f(x+y+z)- f(x+y) + f(x+z) + f(y+z).
\end{equation}

\begin{definition}
A symmetric function $f\colon \Gamma\to \QQ$ 
 is called \emph{F-nef} 
if for all elements $x,y,z\in \Gamma$ we have
\begin{equation}\label{eq:Fnef}
\langle f, (x,y,z)\rangle \geq 0,
\end{equation}
that is 
\begin{equation}\label{F-condition}
f(x)+f(y)+f(z)+f(x+y+z)\geq f(x+y) + f(x+z) + f(y+z).
\end{equation}
\end{definition}

F-nef functions on $\Sigma_n$ (respectively, on an abelian group $\Gamma$) form a convex cone in the space of all functions $\Sigma_n \to\QQ$ (respectively, the space of all functions $\Gamma\to \QQ$).

Importantly, given an F-nef function $f\colon \ZZ_m \to \QQ$ and $n$ divisible by $m$, the divisor
\begin{equation}\label{E:divisor-intro-symmetric}
\DD \bigl(\ZZ_m, f; (1,\dots,1)\bigr)=
f(1) \psi -\sum_{I,J} f(|J|)\Delta_{I,J}.
\end{equation}
is an $S_{n}$-invariant F-nef divisor on $\M_{0,n}$.

\section{Standard, total, and supertotal F-nef functions}

In this section, we describe a family of F-nef functions on finite cyclic groups that we call \emph{standard functions}. These will be instrumental in our proof of Theorem \ref{T:equivalence}. 

Take $\Gamma=\ZZ_m$. For $k\in \ZZ$, we denote by $\modp{k}{m}$ the residue of $k$ in the set $\{0,1,\dots, m-1\}$. 
\begin{definition} \label{E:standard}
We define the \emph{standard function} $A_m\co \ZZ_m \ra \ZZ$ by
\begin{equation*}
A_{m}(i)=\modp{i}{m}\modp{m-i}{m}.
\end{equation*}
Given $a,b,c \in \ZZ_m$, set $d=\langle m-a-b-c\rangle_m$. It is easy to check that 
 \begin{align}\label{E:standard-function}
A_m(a)+A_m(b)&+A_m(c)+A_m(a+b+c)- A_m(a+b)-A_m(a+c)-A_m(b+c) \notag \\ &=\begin{cases} 0 \qquad \text{if $a+b+c+d=m$ \ \ or \ \ $a+b+c+d=3m$,}  \\
 2m \cdot \min\{ a, b, c, d, m-a, m-b, m-c, m-d \}, \ \text{otherwise.}
 \end{cases}
\end{align}
We conclude that $A_m$ is F-nef. 
\end{definition}

For any $k\in \BZ$, let
\[
A_{m,k}(x) = A_m(kx).
\]
This function is also F-nef. 

Set 
\begin{equation}\label{E:total}
T_m:=\sum_{k=1}^{m-1} A_{m,k},
\end{equation}
which we call \emph{the total standard function on $\ZZ_m$}.

We have
\begin{lem}\label{lem:improve once}
	Suppose $x,y,z\in\ZZ_m$ are such that $x,y,z,x+y+z\neq 0$.	Then 
\begin{equation}
\langle T_m, (x,y,z)\rangle=T_m(x) + T_m(y) + T_m(z) + T_m(x+y+z) - T_m(x+y)-T_m(x+z)-T_m(y+z)>0.
\end{equation}

\end{lem}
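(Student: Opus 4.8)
The plan is to reduce the claimed strict inequality to an elementary statement about greatest common divisors, by first evaluating the total standard function in closed form.

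\smallskip
\emph{Step 1 (closed form for $T_m$).} First I would show that for every $x\in\ZZ_m$,
\[
T_m(x)=\frac{m\bigl(m^2-\gcd(x,m)^2\bigr)}{6};
\]
in particular $T_m(x)$ depends only on $\gcd(x,m)$. Writing $d=\gcd(x,m)$, $m=dm_1$, $x=dx_1$ with $\gcd(x_1,m_1)=1$, one has $\modp{kx}{m}=d\,\modp{kx_1}{m_1}$, and as $k$ runs over $\{1,\dots,m\}$ the residue $\modp{kx_1}{m_1}$ hits each element of $\{0,1,\dots,m_1-1\}$ exactly $d$ times (the $k=m$ term contributes $0$, so enlarging the range from $\{1,\dots,m-1\}$ to $\{1,\dots,m\}$ is harmless). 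Since $A_m(t)=\modp{t}{m}\,\modp{-t}{m}$, summing and using $\sum_{j=0}^{n-1}j(n-j)=(n^3-n)/6$ gives $T_m(x)=d\sum_{j=0}^{m_1-1}(jd)(m-jd)=d^3(m_1^3-m_1)/6=(m^3-d^2m)/6$, as claimed.

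\smallskip
\emph{Step 2 (reduction to a gcd inequality).} Substituting Step 1 into the defining expression for $\langle T_m,(x,y,z)\rangle$, the constant $m^3/6$ enters with total coefficient $4-3=1$, and one is left with
\[
\langle T_m,(x,y,z)\rangle=\frac{m}{6}\Bigl(m^2+\gcd(x+y,m)^2+\gcd(x+z,m)^2+\gcd(y+z,m)^2-\gcd(x,m)^2-\gcd(y,m)^2-\gcd(z,m)^2-\gcd(x+y+z,m)^2\Bigr).
\]
Hence it suffices to prove
\[
m^2+\gcd(x+y,m)^2+\gcd(x+z,m)^2+\gcd(y+z,m)^2>\gcd(x,m)^2+\gcd(y,m)^2+\gcd(z,m)^2+\gcd(x+y+z,m)^2
\]
under the hypothesis that $x,y,z,x+y+z$ are all nonzero in $\ZZ_m$.

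\smallskip
\emph{Step 3 (the gcd inequality).} Set $s=-(x+y+z)$, so the four elements $x,y,z,s$ sum to $0$ in $\ZZ_m$; by hypothesis each of $\gcd(x,m),\gcd(y,m),\gcd(z,m),\gcd(s,m)$ is a \emph{proper} divisor of $m$, hence is $\le m/2$, and any proper divisor of $m$ other than $m/2$ is $\le m/3$. Moreover, since $x+y\equiv-(z+s)$ and cyclically, the three numbers $\gcd(x+y,m),\gcd(x+z,m),\gcd(y+z,m)$ are exactly the gcds with $m$ of the pairwise sums of $x,y,z,s$. Now split into two cases. If at most one of $\gcd(x,m),\gcd(y,m),\gcd(z,m),\gcd(s,m)$ equals $m/2$, then the right-hand side of the desired inequality is at most $(m/2)^2+3(m/3)^2=7m^2/12<m^2$, and we are done since the left-hand side is $\ge m^2$. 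Otherwise two of these four gcds equal $m/2$, attached to two of the elements $u,v\in\{x,y,z,s\}$; then $u\equiv v\equiv m/2$ (the only residue whose gcd with $m$ is $m/2$), so $u+v\equiv0$ and $\gcd(u+v,m)=m$, and this $\gcd(u+v,m)$ occurs among $\gcd(x+y,m),\gcd(x+z,m),\gcd(y+z,m)$. So the left-hand side is $\ge 2m^2$ while the right-hand side is $\le 4(m/2)^2=m^2$; again done. Since $m\ge 1$, this yields $\langle T_m,(x,y,z)\rangle>0$.

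\smallskip
\emph{Main obstacle.} The only delicate point is the second case of Step 3: the crude bound ``each distinguished gcd is $\le m/2$, hence the right-hand side is $\le m^2$'' is by itself not enough to beat the $m^2$ on the left, and one must notice that when those gcds are maximal the group elements are forced to coincide (equal to $m/2$), which forces one of the pairwise sums to vanish and so supplies the extra $m^2$. Everything else is routine: Step 1 is a standard lattice-point count and Step 2 is linear bookkeeping.
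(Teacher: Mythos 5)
Your proof is correct, and it takes a genuinely different route from the paper's. The paper works directly with the decomposition $T_m=\sum_{k=1}^{m-1}A_{m,k}$: each summand contributes $\ge 0$ by F-nefness of $A_m$, so it suffices to exhibit a single ``witness'' exponent $k$ with $\langle A_{m,k},(x,y,z)\rangle>0$. This is done by examining $s_k=\modp{kx}{m}+\modp{ky}{m}+\modp{kz}{m}$: after possibly replacing $(x,y,z)$ by $(-x,-y,-z)$ one may assume $s_1<m$, and then the smallest $k$ with $s_k>m$ satisfies $m<s_k<2m$ and $kx,ky,kz\neq 0$, forcing $\langle A_{m,k},(x,y,z)\rangle>0$ via \eqref{E:standard-function}. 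You instead observe the closed formula $T_m(x)=\tfrac{m}{6}\bigl(m^2-\gcd(x,m)^2\bigr)$ (a standard lattice-point count, which I verified), so that $\langle T_m,(x,y,z)\rangle=\tfrac{m}{6}\bigl(m^2+g_{xy}^2+g_{xz}^2+g_{yz}^2-g_x^2-g_y^2-g_z^2-g_{xyz}^2\bigr)$, and then prove the resulting elementary gcd inequality. Your case analysis there is sound: the key observation that if two of $\gcd(x,m),\gcd(y,m),\gcd(z,m),\gcd(s,m)$ equal $m/2$ then the corresponding elements are both $\equiv m/2$, so one of the three pairwise-sum gcds equals $m$ (using that the six pairwise sums of $\{x,y,z,s\}$ produce, up to sign, exactly $x+y,x+z,y+z$), is exactly what is needed to close the tight case. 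Your approach has the advantage of giving an explicit quantitative lower bound (of order $m^3$) for the gap, and of exhibiting $T_m$ as depending only on $\gcd(x,m)$, which is a nice structural fact not visible in the paper's argument; the paper's approach is shorter and avoids the exact evaluation, instead exploiting only the nonnegativity of the individual $A_{m,k}$.
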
 

\begin{proof}
	For any $k\in\BZ$, let $s_k=\modp{k x}{m}+\modp{k y}{m}+\modp{k z}{m}$. Replacing $x,y,z$ by $-x,-y,-z$ if necessary we may assume that $s_1<2m$. If $s_1>m$,  then already $\langle A_{m,1}, (x,y,z)\rangle>0$ by
    \eqref{E:standard-function}.
    Suppose $s_1<m$. Then we have 
	\[
	s_{m-1}= 3m-s_1>2m.
	\]
	Let $k>0$ be the smallest integer satisfying $s_k>m$. So we have $s_{k-1}\leq m$. The inequality $\modp{kx}{m}\leq \modp{(k-1) x}{m} + \modp{x}{m}$, and similarly for $y, z$ implies
	\[
	s_k\leq s_{k-1}+s_1<2m.
	\]
	Let us show that $kx, ky, kz\neq 0$. Suppose the contrary. Without loss of generality assume $k x=0$. Then $\modp{(k-1) x}{m}=m-\modp{x}{m}$, and therefore
	\begin{multline}
	s_k = \modp{k y}{m} + \modp{k z}{m} \leq \modp{(k-1) y}{m} + \modp{(k-1) z}{m}  +\modp{y}{m} +\modp{z}{m} \\ \leq m-\modp{(k-1)x}{m}  +\modp{y}{m} +\modp{z}{m}  = s_1<m,
	\end{multline}
	a contradiction. So we have $kx,ky,kz\neq 0$ and $m<\modp{k x}{m}+\modp{k y}{m}+\modp{k z}{m}<2m$. Then $\langle A_{m,k}, (x,y,z)\rangle>0$ by
    \eqref{E:standard-function}.
\end{proof}

\subsection{Key Lemma}
Choose distinct prime numbers $p_1,p_2,\ldots,p_{n-1}$. Set 
\[
\Gamma = \prod_{i\in [n-1]} \ZZ_{p_i} \simeq \ZZ_{\prod_{i=1}^n p_i}.
\]
The components of $x\in\Gamma$ are denoted by $x_i\in\ZZ_{p_i}$. For each pair of distinct indices $i,j$, we denote by $x_{i,j}\in \ZZ_{p_ip_j}$ the element corresponding to $(x_i,x_j)$ via the isomorphism $\ZZ_{p_i p_j}\cong \ZZ_{p_i}\times \ZZ_{p_j}$.

Given $x\in \Gamma$, we define the support of $x$ to be 
\begin{equation}\label{E:support-x}
\supp(x):=\{i \mid x_i\neq 0 \in \ZZ_{p_i}\} \subset [n-1]/
\end{equation}

We define \emph{the supertotal function} $ST$ on $\Gamma\simeq \ZZ_{p_1\cdots p_{n-1}}$ by (cf. \eqref{E:total})
\begin{equation}\label{E:supertotal}
ST(x)=\sum_{i=1}^{n-1} T_{p_i}(x_i)+\sum_{1\leq i <j \leq<n-1} T_{p_ip_j}(x_{i,j}).
\end{equation}

We have the following key lemma:

\begin{lemma}\label{L:supertotal-lemma}
Suppose $x,y,z,w$ 
are nonzero elements of $\Gamma$ satisfying $x+y+z+w=0\in \Gamma$. Then 
\begin{equation}\label{E:ST}
\langle ST, (x,y,z)\rangle = ST(x) + ST(y) + ST(z) + ST(w) - ST(x+y)-ST(x+z)-ST(y+z)\geq 0
\end{equation}
and the equality holds
if and only if $\supp(a)$, $\supp(b)$, and $\supp(c)$ are non-empty pairwise disjoint subsets in $[n-1]=\{1,\dots,n-1\}$ for some triple $\{a,b,c\} \subset \{x,y,z,w\}$.
\end{lemma}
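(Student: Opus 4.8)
The plan is to reduce the statement about the supertotal function $ST$ on $\Gamma = \prod_{i\in[n-1]}\ZZ_{p_i}$ to the component-wise statements already established for the total standard functions $T_m$. First I would write $ST$ as a sum over ``coordinate blocks'': for each singleton $\{i\}$ the summand $T_{p_i}(x_i)$, and for each pair $\{i<j\}$ the summand $T_{p_ip_j}(x_{i,j})$. Since the bracket $\langle\,\cdot\,,(x,y,z)\rangle$ is linear in the function, it suffices to analyze $\langle T_{p_i}, (x_i,y_i,z_i)\rangle$ for each $i$ and $\langle T_{p_ip_j}, (x_{i,j},y_{i,j},z_{i,j})\rangle$ for each pair, and then sum. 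Each of these summands is $\geq 0$ because $T_m$ is F-nef (it is a nonnegative combination of the F-nef functions $A_{m,k}$), so inequality \eqref{E:ST} is immediate. The content is in the equality analysis: \eqref{E:ST} is an equality if and only if \emph{every} coordinate-block summand vanishes.

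For the equality characterization I would invoke Lemma \ref{lem:improve once}, which says $\langle T_m,(u,v,w)\rangle > 0$ whenever $u,v,w,u+v+w$ are all nonzero in $\ZZ_m$. Contrapositively, the pairwise-pair summand $\langle T_{p_ip_j},(x_{i,j},y_{i,j},z_{i,j})\rangle$ vanishes only if at least one of $x_{i,j}, y_{i,j}, z_{i,j}, w_{i,j}$ is zero in $\ZZ_{p_ip_j}$ — equivalently, one of $x,y,z,w$ has trivial $(i,j)$-projection, i.e. $i,j\notin \supp$ of that element. Similarly the singleton summand $\langle T_{p_i},(x_i,y_i,z_i)\rangle$ vanishes only if one of $x,y,z,w$ has $i\notin\supp$. (One should double-check the edge cases where $T_m$ is literally constant or where the inputs force vanishing for trivial reasons; since $p_i$ are primes $\geq 2$ and $T_m$ for $m$ prime is genuinely nonconstant, Lemma \ref{lem:improve once} gives the clean dichotomy.) So equality in \eqref{E:ST} holds iff for every pair $\{i,j\}$ some element of $\{x,y,z,w\}$ misses both $i$ and $j$ from its support, \emph{and} for every singleton $i$ some element misses $i$.

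The final combinatorial step translates this into the stated condition. Given four subsets $\supp(x),\supp(y),\supp(z),\supp(w)\subseteq[n-1]$ whose associated elements sum to zero, I claim the equality condition is equivalent to: some three of these four supports are non-empty and pairwise disjoint. The forward direction: from the pairwise condition, for each pair of indices lying in two \emph{different} supports we get a contradiction unless... — more carefully, if two of the supports, say $\supp(x)$ and $\supp(y)$, share an index $i$, then taking any $j\in\supp(x)$ (possibly $j=i$, handled by the singleton condition) forces $i$ or $j$ to lie outside all of $\supp(y),\supp(z),\supp(w)$ in a way that, combined across all pairs, pins down that at most one support can overlap the others, and one then checks the three remaining are forced disjoint and non-empty (a support cannot be empty: if $\supp(y)=\emptyset$ then $y=0$, contradicting $y\neq 0$). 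The reverse direction is a direct verification: if $\supp(a),\supp(b),\supp(c)$ are non-empty and pairwise disjoint for $\{a,b,c\}\subset\{x,y,z,w\}$, then for any singleton $i$ it lies in at most one of these three supports, so some element among $a,b,c$ misses $i$; and for any pair $\{i,j\}$, since $\supp(d) = \supp(a)\cup\supp(b)\cup\supp(c)$ where $d$ is the fourth element (because the four elements sum to zero, $d_i = -(a_i+b_i+c_i)$, and disjointness of the supports means at most one of $a_i,b_i,c_i$ is nonzero, so $d_i\neq 0$ iff exactly one is nonzero, giving $\supp(d)\supseteq$ the union; the reverse inclusion $\subseteq$ also holds since each coordinate of $d$ is a sum of coordinates of $a,b,c$), the pair $\{i,j\}$ meets at most two of the four supports, hence one of the remaining summands vanishes.

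\textbf{Main obstacle.} The routine part is the reduction and the inequality; the delicate part is the forward direction of the equality characterization — extracting from the index-wise vanishing conditions the clean statement ``three supports, pairwise disjoint and non-empty.'' The subtlety is bookkeeping the quantifiers (``for every pair $\{i,j\}$ there exists an element missing both'') and converting an existential-per-pair statement into a uniform structural statement about the four sets; one has to argue that the ``bad'' element cannot keep changing as $\{i,j\}$ varies, which is where the relation $\supp(w) = \supp(x)\cup\supp(y)\cup\supp(z)$ (valid precisely when those three are pairwise disjoint) does the work. I expect this to require a short but careful case analysis on how many pairwise intersections among the four supports are non-empty.
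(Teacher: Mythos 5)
Your reduction to coordinate blocks, the use of Lemma~\ref{lem:improve once} to characterize vanishing of each summand, and your reverse direction (via $\supp(w)=\supp(x)\sqcup\supp(y)\sqcup\supp(z)$ when the three are disjoint) are all correct and match the paper's route. The gap is in the forward direction, which you correctly identify as the delicate part but do not actually carry out, and the argument you gesture at is muddled: from ``$i\in\supp(x)\cap\supp(y)$ and the pair condition for $\{i,j\}$'' you can only conclude that \emph{some} element $e\in\{z,w\}$ misses both $i$ and $j$, not that ``$i$ or $j$ lies outside all of $\supp(y),\supp(z),\supp(w)$''; as stated the deduction does not go through, and chasing quantifiers over varying pairs $\{i,j\}$ without further structure does not obviously pin down a single element as the ``union'' of the other three.

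The paper closes this gap with a cleaner bookkeeping device you should adopt. For each coordinate $i\in[n-1]$ set $Z_i\subset\{1,2,3,4\}$ to record which of $x_i,y_i,z_i,w_i$ vanish. The equality hypothesis, via Lemma~\ref{lem:improve once} applied to the singleton and pair summands, says exactly that each $Z_i\neq\varnothing$ and $Z_i\cap Z_j\neq\varnothing$ for all $i,j$; nonvanishing of $x,y,z,w$ says $\bigcap_i Z_i=\varnothing$; and the relation $x+y+z+w=0$ rules out $|Z_i|=3$ (three zeros force the fourth). The pairwise-intersection condition rules out $|Z_i|=1$ (a singleton $Z_i=\{r\}$ would force $r\in Z_j$ for all $j$, contradicting empty total intersection). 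So $|Z_i|\in\{2,4\}$, and a short argument (pairwise-intersecting $2$-subsets of a $4$-set with empty total intersection must form a ``triangle'') shows the size-$2$ blocks omit a common element $r$; relabeling so $r=4$, the sets $A=\{i: Z_i=\{2,3\}\}$, $B=\{i:Z_i=\{1,3\}\}$, $C=\{i:Z_i=\{1,2\}\}$ are exactly $\supp(x),\supp(y),\supp(z)$ and are non-empty and pairwise disjoint. This is the ``careful case analysis'' your proposal leaves open, and it is genuinely where the work is.
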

\begin{proof}
The standard functions $T_*$ are all F-nef, so the inequality is clear. The content of the lemma is the condition on when the equality holds. 

First, suppose $\supp(a)$, $\supp(b)$, and $\supp(c)$ are non-empty pairwise disjoint subsets in $[n-1]=\{1,\dots,n-1\}$ for some triple $\{a,b,c\} \subset \{x,y,z,w\}$. Then for each pair of indices $i\neq j$, one of $\{a,b,c\}$, 
say $a$, satisfies $a_{i,j}=0$ so that 
by \eqref{E:standard-function}, we have
\[
\langle T_{p_i}, (a,b,c)\rangle = \langle T_{p_j}, (a,b,c)\rangle =\langle T_{p_ip_j}, (a,b,c)\rangle = 0.
\] 
Thus $\langle ST, (a, b, c)\rangle =0$, as desired.

Conversely, suppose that the equality holds in \eqref{E:ST}.
    If for some $i\in I$ we have $x_i$,$y_i$,$z_i$,$w_i$ are all non-zero, the claim follows by Lemma \ref{lem:improve once} 
    applied to the F-nef function $T_{p_i}$.

	Similarly, if for some $i\neq j$,
    all of $x_{i,j}$, $y_{i,j}$, $z_{i,j}$, $w_{i,j}$ are non-zero,
    then we are done by Lemma \ref{lem:improve once} 
    applied to $T_{p_ip_j}$.
	
We are left with the situation when for each pair $i\neq j$ at least one out of $x_{i,j}$, $y_{i,j}$, $z_{i,j}$, $w_{i,j}$ is zero. 

For each $i\in I$,  let $Z_i\subset\{1,2,3,4\}$ be such that $1$, $2$, $3$, respectively $4\in Z_i$ if and only if $x_i=0$, $y_i=0$, $z_i= 0$, respectively $w_i= 0$. Then we have that $Z_i$ is non-empty for each $i$ and for each $i,j$ we have $Z_i\cap Z_j\neq\varnothing$.
	
	Observe that $\bigcap_{i\in I} Z_i = \varnothing$. Indeed, suppose $1\in Z_i$ for all $i\in I$. This means $x=0$, a contradiction.
	
	Notice that $|Z_i|$ cannot be $3$: if three out of $x_i, y_i, z_i, w_i$ are zero, the fourth one has to be zero too. Also $|Z_i|$ cannot be $1$: if $Z_i=\{r\}$ for some $i\in I$, we must have $r\in Z_j$ for all $j\in I$, a contradiction.
	
	So the only remaining possibility is that for each $i\in I$ we have $|Z_i|=2$ or $|Z_i|=4$.
    It is then easy to see that the union of all $|Z_i|'s$ of size $2$ has cardinality $3$.
    
	
	So we have shown that there exists $r\in\{1,2,3,4\}$ such that for each $i\in I$ we have $|Z_i|=2$ and $r\notin Z_i$ 
    or $Z_i=\{1,2,3,4\}$. Permuting $x,y,z,w$, we can assume $r=4$. Denote 
	\[
	A=\{i\in I\;|\; Z_i=\{2,3\}\},\quad B=\{i\in I\;|\; Z_i=\{1,3\}\},\quad C=\{i\in I\;|\; Z_i=\{1,2\}\}.
	\] 
	These sets are non-empty and pairwise do not intersect. Clearly,
    \[\supp(x)=A, \quad \supp(y)=B,  \quad \supp(z)=C,\]
    which finishes the proof.
\end{proof}

\section{Proof of Theorem \ref{T:equivalence}}

\subsection{Main construction}
\label{S:main-construction}
Suppose $f$ is an F-nef function on $\Sigma$, the set of non-empty subsets of $[n-1]$. Choose distinct prime numbers $p_1,p_2,\ldots,p_{n-1}$. Take 
\[
\Gamma = \prod_{i\in I} \ZZ_{p_i} \simeq \ZZ_{p_1\cdots p_{n-1}}.
\]


Define $\wt f\colon \Gamma\to \QQ$ by $\wt f(0)=0$ and 
\begin{equation}\label{E:f-tilda}
\wt f(x) = f(\supp(x)), \quad \text{if $x\neq 0$.}
\end{equation}
We clearly have $\wt f(x) = \wt f(-x)$ for all $x\in\Gamma$.

\begin{lemma}\label{L:main-lemma}
 $F:=\wt f +c \, ST$ is an F-nef function on $\Gamma$ for all large enough rational $c\gg 0$. 
 \end{lemma}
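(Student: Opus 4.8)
The plan is to exploit that $\Gamma$ is finite---so there are only finitely many triples $(x,y,z)$ to test---together with the sharp equality analysis of Lemma~\ref{L:supertotal-lemma}, which pinpoints exactly where the correction term $c\,ST$ stops being strictly positive. First I would record a symmetry. Writing $w=-(x+y+z)$ and using that $\wt f$ and $ST$ are symmetric with $\wt f(0)=ST(0)=0$ (the latter because $A_m(0)=0$, hence $T_m(0)=0$), the relations $x+y=-(z+w)$, $x+z=-(y+w)$, $y+z=-(x+w)$ give $F(x+y)=F(z+w)$, $F(x+z)=F(y+w)$, $F(y+z)=F(x+w)$, so that
\[
\langle F,(x,y,z)\rangle=F(x)+F(y)+F(z)+F(w)-F(x+y)-F(x+z)-F(y+z)
\]
is symmetric in $x,y,z,w$; likewise for $\langle\wt f,(x,y,z)\rangle$ and $\langle ST,(x,y,z)\rangle$. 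If one of $x,y,z,w$ is zero, then $F(0)=0$ together with these relations forces $\langle F,(x,y,z)\rangle=0$, so I may assume $x,y,z,w$ are all nonzero---precisely the hypothesis of Lemma~\ref{L:supertotal-lemma}, which then yields $\langle ST,(x,y,z)\rangle\ge 0$.

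Next I would split on the value of $\langle ST,(x,y,z)\rangle$. Because $\Gamma$ is finite, $M:=\max_{x,y,z\in\Gamma}\bigl|\langle\wt f,(x,y,z)\rangle\bigr|$ is finite and
\[
\delta:=\min\bigl\{\langle ST,(x,y,z)\rangle \;:\; x,y,z,w\ne 0,\ \langle ST,(x,y,z)\rangle>0\bigr\}>0
\]
(the claim is vacuous if this set is empty). For every rational $c\ge M/\delta$, every triple with $\langle ST,(x,y,z)\rangle>0$ obeys
\[
\langle F,(x,y,z)\rangle=\langle\wt f,(x,y,z)\rangle+c\,\langle ST,(x,y,z)\rangle\ge -M+c\delta\ge 0.
\]
So the whole burden falls on the triples with $\langle ST,(x,y,z)\rangle=0$, where no help comes from $c$ and one must prove $\langle\wt f,(x,y,z)\rangle\ge 0$ outright.

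For such a triple, the equality clause of Lemma~\ref{L:supertotal-lemma} says that three of the four elements $x,y,z,w$ have nonempty, pairwise disjoint supports; by the symmetry above I relabel so that these three are $x,y,z$ (and $w=-(x+y+z)$ is the remaining one). Put $X=\supp(x)$, $Y=\supp(y)$, $Z=\supp(z)$: pairwise non-intersecting nonempty subsets of $[n-1]$, hence pairwise non-intersecting elements of $\Sigma$. Since no index lies in more than one of $X,Y,Z$, disjointness forces $\supp(x+y)=X\cup Y$, $\supp(x+z)=X\cup Z$, $\supp(y+z)=Y\cup Z$ and $\supp(w)=X\cup Y\cup Z$, all of them nonzero. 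Substituting $\wt f(v)=f(\supp v)$ into each term then identifies $\langle\wt f,(x,y,z)\rangle$ with
\[
f(X)+f(Y)+f(Z)+f(X\cup Y\cup Z)-f(X\cup Y)-f(X\cup Z)-f(Y\cup Z)=\langle f,c_{X,Y,Z}\rangle,
\]
which is $\ge 0$ because $f$ is F-nef on $\Sigma$. Hence $\langle F,(x,y,z)\rangle\ge 0$, and combining the three cases, $F$ is F-nef for every rational $c\ge M/\delta$.

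The only step that is more than bookkeeping is this last case: the main point I expect to have to make is that the equality locus of $ST$ isolated by Lemma~\ref{L:supertotal-lemma} is exactly where $\wt f$, viewed through the support map $\supp$, degenerates into the combinatorial F-curve pairing $\langle f,c_{X,Y,Z}\rangle$ on $\Sigma$---so F-nefness of $f$ is not merely sufficient but precisely what is needed there. The remaining ingredients---the symmetry reduction to $\{x,y,z,w\}$, the degenerate cases, and the finite choice of $c$---are routine.
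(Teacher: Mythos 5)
Your proof is correct and follows the same route as the paper: split on whether $\langle ST,(x,y,z)\rangle$ is positive or zero, use finiteness of $\Gamma$ to choose $c$, and in the equality case invoke Lemma~\ref{L:supertotal-lemma} to relabel $x,y,z,w$ so that $\supp(x),\supp(y),\supp(z)$ are disjoint and nonempty, whence $\langle\wt f,(x,y,z)\rangle=\langle f,c_{\supp(x),\supp(y),\supp(z)}\rangle\ge 0$. You fill in two details the paper treats implicitly---the degenerate case where one of $x,y,z,w$ vanishes (where everything is zero because $F(0)=0$), and the explicit bound $c\ge M/\delta$ via compactness of the finite group---but these are exactly the points the paper's ``for all large enough $c\gg 0$'' and appeal to Lemma~\ref{L:supertotal-lemma} are silently absorbing.
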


 \begin{proof} Consider a triple $x,y,z\in \Gamma$. 
 If $\langle ST, (x,y,z)\rangle>0$, then 
 \[
 \langle F, (x,y,z)\rangle= \langle \wt f, (x,y,z)\rangle+c  \langle ST, (x,y,z)\rangle >0.
 \]
 
 Suppose $\langle ST, (x,y,z)\rangle=0$. Then by Lemma \ref{L:supertotal-lemma}, we can 
 assume without loss of generality that $\supp(x), \supp(y), \supp(z)$ are \emph{non-empty pairwise disjoint subsets} of $[n-1]$,
 so that we have a well-defined 
 F-curve 
 $c_{\supp(x),\supp(y),\supp(z)}$ in $\M_{0,n}$. 
 
 We
 compute  
 \begin{multline}
 \langle F, (x,y,z)\rangle= \langle \wt f, (x,y,z)\rangle \\= f(\supp(x))+ f(\supp(y))+f(\supp(z))+f(\supp(x+y+z))\\ -f(\supp(x+y))-f(\supp(x+z))-f(\supp(y+z)) \\
 =f(\supp(x))+ f(\supp(y))+f(\supp(z))+f(\supp(x)\cup \supp(y) \cup \supp(z))) \\ -f(\supp(x)\cup \supp(y))-f(\supp(x) \cup \supp(z))-f(\supp(y) \cup\supp(z))\\
= \langle f, c_{\supp(x),\supp(y),\supp(z)}\rangle \geq 0,
  \end{multline}
  by the F-nefness of $f$.

 \end{proof}

\subsection{Every F-nef divisor is a pullback of a symmetric F-nef divisor}

Theorem \ref{T:equivalence} clearly follows from the following:
\begin{thm}\label{thm:main theorem}
	For every F-nef divisor class $D$ on $\M_{0,n}$, there exists an attaching map 
	\[
	att_{(a_1,\dots,a_n)}\colon \M_{0,n} \to \M_{0,\sum_i a_i}
	\] such that 
	$D$ is a pullback of an F-nef symmetric divisor on $\M_{0,\sum_i a_i}$ via $att_{(a_1,\dots,a_n)}$.
\end{thm}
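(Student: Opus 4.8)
The plan is to deduce Theorem \ref{thm:main theorem} from Lemma \ref{L:main-lemma} by interpreting the F-nef function $F = \wt f + c\,ST$ on $\Gamma \simeq \ZZ_{p_1\cdots p_{n-1}}$ as a symmetric F-nef divisor on a large moduli space and then identifying $D = D(f)$ with its pullback under a suitable attaching map. First I would fix an F-nef divisor class $D$ on $\M_{0,n}$, represent it as $D(f)$ for an F-nef function $f \in \Fun(\Sigma_n)$ via the description in Section~\ref{S:divisors}, choose distinct primes $p_1,\dots,p_{n-1}$, form $\Gamma$, and invoke Lemma \ref{L:main-lemma} to get $c \gg 0$ with $F = \wt f + c\,ST$ an F-nef function on $\Gamma$.

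Next I would feed $F$ into the construction \eqref{E:divisor-intro}: setting $N := p_1 \cdots p_{n-1} = |\Gamma|$ and choosing the $N$-tuple of elements of $\Gamma$ consisting of the ``standard basis'' $d_i$ — namely, for each $i \in [n-1]$ take the element $e_i \in \Gamma$ supported only in the $\ZZ_{p_i}$-factor, repeated with appropriate multiplicities so that the whole collection sums to $0 \in \Gamma$ and has total size $N$ — one obtains a symmetric F-nef divisor $\DD(\Gamma, F; \cdot)$ on $\M_{0,N}$. The key point is then combinatorial: the attaching map $att_{(a_1,\dots,a_n)}\colon \M_{0,n} \to \M_{0,N}$ determined by grouping the $N$ markings into $n$ clusters, where the $i$-th cluster ($i \in [n-1]$) collects exactly the markings labelled by elements with support $\{i\}$ and the $n$-th cluster collects the rest, should pull back $\DD(\Gamma, F; \cdot)$ to $D(f)$. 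Concretely, under $att^*$ a boundary divisor $\Delta_S$ on $\M_{0,N}$ restricts according to which side of the partition each cluster lands on, and the support function $\supp$ is engineered precisely so that the value $F$ assigns to a sum $\sum_{i\in S} d_i$ depends only on the set of clusters meeting $S$, i.e. on a subset of $[n-1]$; the term $c\,ST$ is designed to vanish in exactly the configurations coming from honest F-curves, so that after pullback the $c\,ST$ contribution collapses to a multiple of Keel relations and $\wt f$ contributes $f(\supp(\cdot))$, which by \eqref{E:f-tilda} is $f$ on the relevant subsets. I would verify this by a direct comparison of \eqref{E:divisor-intro} with \eqref{Df}, using that $\wt f(x) = f(\supp x)$ and that $att^*$ commutes with the boundary-to-F-curve duality (the pullback of a divisor pairs with an F-curve $c_{X,Y,Z}$ on $\M_{0,n}$ the same way $\DD(\Gamma,F;\cdot)$ pairs with the pushed-forward F-curve on $\M_{0,N}$).

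The main obstacle I expect is the bookkeeping in the previous step: one must choose the $N$-tuple $(d_1,\dots,d_N)$ and the clustering so that (a) the clusters are nonempty and of admissible sizes (each at least $1$, and the construction stays within $\M_{0,N}$ with $N \geq 4$), (b) the sum $\sum d_i = 0$ in $\Gamma$ holds, and (c) the pullback formula is exact on the nose rather than merely up to the relations $K_i, K_{i,j}$ — equivalently, one should check the identity at the level of F-curve pairings, where Section~\ref{S:curves} gives $\langle K_i, c_{X,Y,Z}\rangle = \langle K_{i,j}, c_{X,Y,Z}\rangle = 0$, so that working with functions modulo Keel relations is harmless. A clean way to organize (c) is: for every $4$-partition $[n] = X \sqcup Y \sqcup Z \sqcup W$ of $[n]$, the F-curve $c_{X,Y,Z}$ on $\M_{0,n}$ pushes forward under $att$ to an F-curve on $\M_{0,N}$ of the form $c_{\supp\text{-type}}$, and $\langle F, (\cdot)\rangle$ on that F-curve equals $\langle f, c_{X,Y,Z}\rangle$ by the computation already carried out inside the proof of Lemma \ref{L:main-lemma}; since both $D$ and $att^*\DD(\Gamma,F;\cdot)$ are determined by their pairings with all F-curves (F-curves span the curve classes on $\M_{0,n}$, as noted in Section~\ref{S:curves}), the two divisor classes coincide. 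Finally, since $\DD(\Gamma, F; \cdot)$ is $S_N$-symmetric and F-nef by \eqref{E:divisor-intro-symmetric} (applied with the group $\Gamma$ and the function $F$, which is symmetric as $\wt f$ and $ST$ both are) and $att_{(a_1,\dots,a_n)}$ realizes $D$ as its pullback, the theorem follows, and Theorem \ref{T:equivalence} is immediate: an F-nef divisor on $\M_{0,n}$ that is a pullback of a nef (assuming Conjecture \ref{Csym0n}) symmetric divisor is itself nef.
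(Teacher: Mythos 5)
Your overall strategy is the paper's: represent $D=D(f)$, form $F=\wt f + c\,ST$ via Lemma \ref{L:main-lemma}, transport $F$ to a divisor on a big moduli space via \eqref{E:divisor-intro}, and verify $att^*(\cdot)=D$ by pairing with F-curves (using $\langle K_i,c_{X,Y,Z}\rangle=\langle K_{i,j},c_{X,Y,Z}\rangle=0$ and the fact that F-curves span $N_1(\M_{0,n})$). However, there is a genuine gap at the crucial step where you claim symmetry. You take the $N$-tuple of weights $(d_1,\dots,d_N)$ to consist of the various ``basis'' elements $e_i\in\Gamma$ with multiplicities, and assert that $\DD(\Gamma,F;(d_1,\dots,d_N))$ is a \emph{symmetric} F-nef divisor on $\M_{0,N}$. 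It is F-nef, but it is not $S_N$-invariant: the coefficient of $\Delta_S$ is $F\bigl(\sum_{i\in S}d_i\bigr)$, which depends on the multiset of weights carried by $S$, not merely on $|S|$. For instance, a two-element $S$ whose markings both carry $e_1$ gets coefficient $f(\{1\})+c\,ST(2e_1)$, while a two-element $S$ carrying $e_1$ and $e_2$ gets $f(\{1,2\})+c\,ST(e_1+e_2)$; these differ in general. The construction \eqref{E:divisor-intro-symmetric} yields a symmetric divisor only when all the weights $d_i$ are equal, and symmetrizing your divisor by averaging over $S_N$ would destroy the identity $att^*(\cdot)=D$, since the average pulls back to an average over conjugated attaching maps. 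So as written, the divisor you exhibit upstairs does not satisfy the conclusion of the theorem.

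The paper resolves exactly this point by moving the ``support engineering'' from the weights into the attaching map. One keeps the constant tuple $(1,\dots,1)$ of elements of $\ZZ_{\mathbf N}$, $\mathbf N=p_1\cdots p_{n-1}$, so that $\DD(\ZZ_{\mathbf N},F;(1,\dots,1))$ is automatically $S_{\mathbf A}$-symmetric, and instead chooses the cluster sizes $a_1,\dots,a_n$ by the Chinese Remainder Theorem: $a_i\equiv 1\pmod{p_i}$, $a_i\equiv 0\pmod{p_j}$ for $j\neq i$, and $a_n\equiv -1\pmod{p_i}$ for all $i$ (whence $\mathbf N\mid \mathbf A=\sum_i a_i$). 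Then for an F-curve $c_{I,J,K}$ on $\M_{0,n}$ the relevant weight sums $A(I)=\sum_{i\in I}a_i$ satisfy $\supp(A(I))=I$, so the pullback of the $\wt f$-part pairs as $\langle f, c_{I,J,K}\rangle$ and the $ST$-part pulls back to zero, exactly the computation you sketch. If you replace your choice of weights and clusters by this CRT choice, the rest of your argument goes through and coincides with the paper's proof.
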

 \begin{remark}
 We note the analogy of this result with the proof of \cite[Theorem (4.7)]{GKM}. 
 \end{remark}
 
\begin{proof}[Proof of \ref{thm:main theorem}]

Choose distinct primes $p_1, \dots, p_{n-1}$ and set $\mathbf N:=\prod_{i=1}^{n-1} p_i$. Choose positive integers $\{a_i\}_{i\in [n-1]}$ such that 
\begin{equation}\label{E:CRT}
\begin{aligned}
a_i &\equiv 1 \pmod{p_i} \\ 
a_i &\equiv 0 \pmod{p_j} \ \text{for all $j\neq i$}.
\end{aligned}
\end{equation}
and a positive integer $a_n$ such that $a_n \equiv -1 \pmod{p_i}$ for all $i\in [n-1]$. 

Set $\mathbf A:=\sum_{i=1}^n a_i$; note that $\mathbf N=p_1\cdots p_{n-1} \mid \mathbf A$. Consider the attaching map 
\[
att:=att_{(a_1,\dots,a_n)}\colon \M_{0,n} \to \M_{0,\mathbf A}
\]
where $att(C,\{p_i\}_{i=1}^{n})$ is obtained from $C$ by attaching a fixed $(a_i+1)$-pointed curve at every $p_i$.

Suppose $D$ is an F-nef divisor on $\M_{0,n}$.  Let $f\colon \Sigma_n\to \ZZ$ be an F-nef function on $\Sigma_n$ such that $D=D(f)$ (cf. \S\ref{S:divisors}).

Let $ST\colon  \ZZ_{\mathbf N} \to \ZZ$ be the supertotal function defined in \eqref{E:supertotal}, and $\wt{f}$ be as defined in \eqref{E:f-tilda}.
  Invoking Lemma \ref{L:main-lemma}, we obtain an F-nef function 
  \[
  F:=\wt f +c \, ST\colon \ZZ_{\mathbf N} \to \ZZ
  \]
  for some $c\gg 0$. 
  
  Since $\mathbf N \mid \mathbf A$, the F-nef function $F$ defines an F-nef $S_{\mathbf A}$-symmetric divisor 
  \begin{equation}
  \DD \bigl(\ZZ_{\mathbf N}, F; (1,\dots,1)\bigr)=\DD \bigl(\ZZ_{\mathbf N}, \wt f; (1,\dots,1)\bigr)+c \, \DD \bigl(\ZZ_{\mathbf N}, ST; (1,\dots,1)\bigr)
  \end{equation}
  on $\M_{0,\mathbf A}$.
  
\begin{claim}
We have 
\begin{align*}
att^*_{(a_1,\dots,a_n)} \DD \bigl(\ZZ_{\mathbf N}, \wt f; (1,\dots,1)\bigr)&=D(f)=D, \\
att^*_{(a_1,\dots,a_n)} \DD \bigl(\ZZ_{\mathbf N}, ST; (1,\dots,1)\bigr)&=0. 
\end{align*}
Consequently,
\begin{equation}
D=att^*_{(a_1,\dots,a_n)}  \DD \bigl(\ZZ_{\mathbf N}, F; (1,\dots,1)\bigr),
\end{equation}
\end{claim}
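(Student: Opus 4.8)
The plan is to compute the pullback of the symmetric divisor $\DD(\ZZ_{\mathbf N}, g; (1,\dots,1))$ under $att_{(a_1,\dots,a_n)}$ for an arbitrary symmetric function $g\colon \ZZ_{\mathbf N}\to\QQ$, and then specialize to $g=\wt f$ and $g=ST$. The key structural fact is that pulling back along an attaching map corresponds, on the level of functions on $\Sigma_\bullet$, to a restriction/substitution operation: if $att=att_{(a_1,\dots,a_n)}\colon\M_{0,n}\to\M_{0,\mathbf A}$ and $h\colon\Sigma_{\mathbf A}\to\QQ$ defines a divisor $D(h)$, then $att^* D(h)=D(h')$ where $h'(S)$ for $S\in\Sigma_n$ is obtained by evaluating $h$ on the ``inflated'' subset $\bigsqcup_{i\in S}A_i$ (with the $A_i$ the chosen blocks of $[\mathbf A]$), suitably interpreted so that the singleton/$\psi$ conventions match up; here one must be slightly careful because a singleton $\{k\}\in\Sigma_n$ inflates to a block of size $a_k$, so $-\psi_k$ on $\M_{0,n}$ pulls back from the boundary divisor $\Delta_{A_k}$ on $\M_{0,\mathbf A}$, which is consistent with the bookkeeping in \S\ref{S:divisors}. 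I would record this once as a small lemma (or simply verify it directly on boundary generators, since $att^*\Delta_{I,J}$ is a standard computation: it is a sum of boundary divisors plus $\psi$-corrections, and under $D(\cdot)$ this is exactly the claimed substitution).

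Next I would compute, for a symmetric $g$, what function on $\Sigma_n$ the pullback $att^*\DD(\ZZ_{\mathbf N},g;(1,\dots,1))$ corresponds to. By \eqref{E:divisor-intro}, $\DD(\ZZ_{\mathbf N},g;(1,\dots,1))=\sum_{T\in\Sigma_{\mathbf A}}g(\langle |T|\rangle_{\mathbf N})\Delta_T$; wait — more usefully, $\DD(\ZZ_{\mathbf N},g;(d_1,\dots,d_{\mathbf A}))$ with all $d_j=1$ has coefficient of $\Delta_T$ equal to $g(|T|\bmod \mathbf N)$. Pulling back along $att$ replaces a subset $S\subset[n]$ by $\bigsqcup_{i\in S}A_i$, whose size is $\sum_{i\in S}a_i$. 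By the congruence conditions \eqref{E:CRT}, for $i\in[n-1]$ we have $a_i\equiv e_i\pmod{\mathbf N}$ where $e_i$ is the element that is $1$ in the $\ZZ_{p_i}$-coordinate and $0$ elsewhere; hence $\sum_{i\in S}a_i\equiv \sum_{i\in S}e_i\pmod{\mathbf N}$, and this residue is precisely the element $x\in\Gamma=\prod\ZZ_{p_i}$ with $\supp(x)=S$ (when $n\notin S$, so $S\in\Sigma_n$ in the standard convention), since each coordinate $x_i$ equals $1$ if $i\in S$ and $0$ otherwise — in particular $x_i\ne 0$ exactly when $i\in S$. Therefore the pullback corresponds to the function $S\mapsto g(x_S)$ where $x_S\in\Gamma$ is the characteristic element of $S$. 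For $g=\wt f$ this gives $S\mapsto \wt f(x_S)=f(\supp(x_S))=f(S)$ by \eqref{E:f-tilda}, i.e. exactly the function $f$, so the pullback is $D(f)=D$. This proves the first equality.

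For the second equality I would show $ST(x_S)$ is constant — in fact identically equal to the value $ST$ takes on the ``all coordinates nonzero'' elements is irrelevant; what matters is that $S\mapsto ST(x_S)$ differs from a constant function by a $\QQ$-linear combination of the Keel relations $K_i,K_{i,j}$, hence defines the zero divisor class. Concretely, $x_S$ has $i$-th coordinate $1$ if $i\in S$ and $0$ otherwise, so $T_{p_i}((x_S)_i)=T_{p_i}(1)\cdot K_i(S)+T_{p_i}(0)\cdot(1-K_i(S))$ and similarly $T_{p_ip_j}((x_S)_{i,j})$ takes one of four fixed values according to which of $i,j$ lie in $S$; expanding, $ST(x_S)$ is an integer linear combination of the functions $K_i$, $K_{i,j}$, $K_iK_j$, and constants — but $K_iK_j=K_{i,j}$ on $\{0,1\}$-valued indicators, and constants are spanned by $K_i+K_{\text{something}}$... more precisely one checks $ST(x_S)\in\Span(K_i,K_{i,j}\mid i,j\in[n-1])+\QQ\cdot\mathbf 1$, and the constant function $\mathbf 1$ on $\Sigma_n$ also lies in this span (e.g. $K_i+K_i(\text{complement})$-type identity, or directly: $\mathbf 1 = K_i + K_{i'}$ fails, but $\mathbf 1$ is a combination of the $K$'s since $D(\mathbf 1)=0$ in $\Pic$ — indeed $-\sum_S\Delta_S$ is a combination of $\psi$'s and Keel relations), so $ST(x_S)$ represents the zero class and $att^*\DD(\ZZ_{\mathbf N},ST;(1,\dots,1))=0$. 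Summing, $att^*\DD(\ZZ_{\mathbf N},F;(1,\dots,1))=att^*\DD(\ZZ_{\mathbf N},\wt f;(1,\dots,1))+c\cdot 0=D$, which is the claim.

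The main obstacle I anticipate is getting the bookkeeping of the attaching-map pullback exactly right at the level of the $\Sigma_\bullet$-presentation — in particular tracking how the singleton conventions $\Delta_{\{k\}}=-\psi_k$ and $\Delta_{[n-1]}=-\psi_n$ interact with inflation of blocks, and verifying that $S\mapsto ST(x_S)$ really lies in the span of the Keel relations (equivalently, that the corresponding divisor class vanishes) rather than merely being constant; this is a finite linear-algebra check but it is where an off-by-a-$\psi$ error would hide.
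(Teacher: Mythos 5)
The approach you take for the first identity is essentially the paper's, just repackaged: you posit a ``substitution lemma'' saying that $att^*$ sends the divisor given by $h\in\Fun(\Sigma_{\mathbf A})$ to the one given by $S\mapsto h\bigl(\bigsqcup_{i\in S}A_i\bigr)$, and then apply it to $h(T)=g(|T|\bmod\mathbf N)$. But the clean way to establish that lemma is to intersect with F-curves and use $att^*\Theta\cdot c_{I,J,K}=\Theta\cdot att_*c_{I,J,K}$ together with the fact that F-curves generate $N_1(\M_{0,n})$ --- and $att_*c_{I,J,K}$ is the F-curve on $\M_{0,\mathbf A}$ with parts of sizes $A(I),A(J),A(K)$ and the complement, where $A(I)=\sum_{i\in I}a_i$ and $\supp A(I)=I$ by the CRT choice of the $a_i$. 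This is exactly the computation in the paper's proof; you have not avoided the work, you have relocated it. Your alternative suggestion of expanding $att^*\Delta_T$ term-by-term is substantially messier, because the image of $att$ sits inside boundary strata and the pullbacks carry $\psi$-class self-intersection corrections.

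For the second identity you do take a genuinely different route --- showing directly that $S\mapsto ST(x_S)$ lies in $\Span(K_i,K_{i,j})$, hence defines the zero class --- and this works, but the justification as written contains a false assertion: $D(\mathbf 1)\ne 0$ in $\Pic(\M_{0,n})\otimes\QQ$. For instance on $\M_{0,4}$ one has $D(\mathbf 1)=\psi_1+\psi_2+\psi_3+\psi_4-\Delta_{\{1,2\}}-\Delta_{\{1,3\}}-\Delta_{\{2,3\}}$, which has degree $1$; equivalently $\mathbf 1\notin\Span(K_i,K_{i,j})$, as one sees by evaluating a putative relation at $\{1\}$, at $\{1,2\}$, and then at $\{1,2,3\}$ (the first two force the coefficients to be $1$ on each $K_i$ and $-1$ on each $K_{i,j}$, and the third then gives $3-3=0\ne 1$). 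You were right to be uneasy. Fortunately no constant term actually appears: since $T_m(0)=0$ for every $m$, one has $T_{p_i}((x_S)_i)=T_{p_i}(1)\,K_i(S)$ with no constant, and $T_{p_ip_j}((x_S)_{i,j})=b\,K_i(S)+c\,K_j(S)+(d-b-c)\,K_{i,j}(S)$, again with no constant, where $b$, $c$, $d$ are the values of $T_{p_ip_j}$ on the three nonzero $\{0,1\}$-patterns. So $S\mapsto ST(x_S)$ lies precisely in $\Span(K_i,K_{i,j})$ once the erroneous appeal to $\mathbf 1$ is deleted, and your conclusion stands. (A side remark: the version of \eqref{E:divisor-intro} you quote inherits a sign typo in the paper --- $\sum_S f(\sum_{i\in S}d_i)\Delta_S$ is the \emph{negative} of $\sum_i f(d_i)\psi_i-\sum_{I,J}f(\sum_I d_i)\Delta_{I,J}$, since $\Delta_{\{k\}}=-\psi_k$ --- but this global sign cancels from the claim being proved.)
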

which finishes the proof that $D$ is a pullback of an F-nef symmetric divisor on $\M_{0,\mathbf A}$.

\begin{proof}
Take an F-curve $c_{I,J,K}$ in $\M_{0,n}$, where $I, J$ and $K$ are non-empty pairwise disjoint subsets of $[n-1]$. Set $A(I):= \sum_{i\in I} a_i$, $A(J):= \sum_{j\in J} a_j$, and $A(K):= \sum_{k\in K} a_k$.
Note that by the choice of $a_i$'s in \eqref{E:CRT}. we have
\[
\supp(A(I))=I, \quad \supp(A(J))=J, \quad \text{and} \quad \supp(A(K))=K.
\]
Note also that for each $i\neq j \in [n-1]$, one of $A(I), A(J)$, or $A(K)$ is divisible by $p_ip_j$ (namely, take a subset not containing $i$ and $j$). It follows that 
\[
\langle T_{p_i}, \bigl((A(I), A(J), A(K)\bigr)\rangle = \langle T_{p_ip_j}, \bigl((A(I), A(J), A(K)\bigr)\rangle=0, \quad \text{for all $i\neq j\in [n-1]$}.
\]
Then 
\begin{align*}
\langle att^*_{(a_1,\dots,a_n)} \DD \bigl(\ZZ_{\mathbf N}, \wt f; (1,\dots,1)\bigr),  c_{I,J,K} \rangle&= \langle \wt f, \bigl((A(I), A(J), A(K)\bigr)\rangle=\langle f, c_{I,J,K}\rangle =D\cdot c_{I,J,K} \, , \\
\langle  att^*_{(a_1,\dots,a_n)} \DD \bigl(\ZZ_{\mathbf N}, ST; (1,\dots,1)\bigr) c_{I,J,K} \rangle&=\langle ST, \bigl((A(I), A(J), A(K)\bigr)\rangle=0.
\end{align*}
Since the classes of F-curves generate $N_1(\M_{0,n})$, this finishes the proof. 

\end{proof}

We also remark that the divisor class $\mathcal{D}:= \DD \bigl(\ZZ_{\mathbf N}, ST; (1,\dots,1)\bigr)$ is a symmetric nef divisor class on $\M_{0, \mathbf A}$ which satisfies 
$att^*\mathcal{D}=0$ and which is positive on all F-curves not in $att_*N_1(\M_{0,n})\subset N_1(\M_{0,\mathbf A})$.
\end{proof}

\begin{remark}
Theorem \ref{thm:main theorem} shows that a symmetric F-nef divisor is not necessarily stratally effective boundary, thus disproving \cite[Conjecture 4.4]{moon-swinarski} and crushing an earlier hope of the first author.
Namely, feed Pixton's a non-stratally effective boundary nef divisor on $\M_{0,12}$ into Theorem \ref{thm:main theorem} to obtain a non-stratally effective boundary symmetric F-nef divisor on some $\M_{0,\mathbf A}$ 
for some large value of $\mathbf A$.
\end{remark}

\section{Proof of Theorem \ref{T:strong-sym}}

Our key result is that in verifying stratal effectivity of symmetric divisors, it suffices to consider only partitions with distinct parts:
\begin{prop}\label{P:main}
An F-nef $S_n$-invariant divisor $D$ on $\M_{0,n}$ is stratally effective boundary if and only if $att_{(a_1,\dots, a_k)}^*D$ is an effective boundary for all \emph{strict} partitions
$n=a_1+\cdots+a_k$, with $a_1>a_2>\cdots >a_k \geq 1$.
\end{prop}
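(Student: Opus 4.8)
\textbf{Proof proposal for Proposition \ref{P:main}.}

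The ``only if'' direction is trivial, so the plan is to prove that if $att_{(a_1,\dots,a_k)}^*D$ is effective boundary for every strict partition $n=a_1+\cdots+a_k$ with $a_1>\cdots>a_k\geq 1$, then the same holds for an arbitrary partition $n=b_1+\cdots+b_\ell$ with possibly repeated parts. The key idea is a \emph{degeneration/limit argument}: given a partition $\lambda=(b_1,\dots,b_\ell)$ with repeated parts, I would approximate the attaching map $att_\lambda$ by a sequence (or a one-parameter family) of attaching maps $att_{\mu}$ indexed by strict partitions $\mu$ that refine the bookkeeping, factoring $att_\lambda$ itself through an attaching map from $\M_{0,\ell}$ composed with further attachings, and using functoriality $att_{(A_1,\dots,A_k)}^* = att_{(B_1,\dots)}^* \circ att'^{\,*}$ for nested partitions. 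Concretely, since $D$ is $S_n$-symmetric, $att_\lambda^* D$ depends only on the multiset $\{b_1,\dots,b_\ell\}$; I would split one repeated block of size $b$ into two attaching points carrying sizes that sum to $b$ but are \emph{distinct}, i.e. pass to a finer partition where the offending equal parts are separated, at the cost of working on a moduli space $\M_{0,m}$ with $m>\ell$. Effectivity of $att_\mu^* D$ for the finer strict partition $\mu$, pushed/pulled through the comparison maps between $\M_{0,\ell}$ and $\M_{0,m}$, should force effectivity of $att_\lambda^* D$.

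The mechanism I expect to make this work cleanly is the following. For the partition $\lambda = (b_1,\dots,b_\ell) \vdash n$, consider the strict partition obtained by ``perturbing'': replace $n$ by a larger integer $n' = n + r$ for a suitable small $r$, and replace $\lambda$ by a strict partition $\lambda'$ of $n'$ obtained from $\lambda$ by adding $0,1,2,\dots$ to successive equal parts (the standard trick turning a partition into a strict one via $(b_i)\mapsto(b_i + \ell - i)$). There is then a natural attaching map $\M_{0,\ell}\to\M_{0,n'}$ realizing $att_{\lambda'}$, and $att_\lambda$ and $att_{\lambda'}$ both factor through the \emph{same} $\M_{0,\ell}$: both are of the form $att_{(A'_1,\dots,A'_\ell)}$ with $|A_i| = b_i$ resp. $|A'_i| = b'_i$. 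So $att_\lambda^* D$ and $att_{\lambda'}^* D$ are both divisors on $\M_{0,\ell}$, and I would compare them directly. The point is that on $\M_{0,\ell}$ the boundary divisors are indexed by subsets of the $\ell$ markings, and the coefficient of a boundary divisor $\Delta_{I,J}$ in $att_{(A_1,\dots,A_\ell)}^* D$ is, by formula \eqref{Df} and the pullback formula for $att^*$ applied to $D = D(f)$, a value of $f$ (or a sum of values of $f$ coming from the stabilization corrections) evaluated on $\sum_{i\in I}|A_i|$ — hence depends only on the \emph{sizes} $|A_i|$ and varies ``continuously'' (piecewise-linearly) as we deform the sizes. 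Thus I would like to exhibit $att_\lambda^* D$ as a limit, or a convex-combination-type average, of $att_{\mu}^* D$ over strict partitions $\mu$, and conclude effectivity of the limit from effectivity of the terms, using that the effective boundary cone is closed.

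The main obstacle, and where the real work lies, is controlling the \emph{stabilization corrections}: when a part $b_i$ equals $1$ the attaching map contracts a component, so $att_\lambda$ near such parts behaves differently from $att_\mu$ for $\mu$ with all parts $\geq 2$, and the formula for $att^*\Delta_S$ acquires $\psi$-class (i.e. non-proper-partition) contributions per convention (ii)–(iii) in \S\ref{S:divisors}. So I must check that the combinatorial identity expressing $att_\lambda^*D$ in terms of $f$ genuinely extends across the walls $b_i = b_j$ and $b_i = 1$ in a way compatible with the closedness of the effective cone; the cleanest route is probably to prove a ``size-monotonicity / interpolation'' lemma: for any two sets $A, A'$ with $|A'| = |A|+1$ obtained by moving one marking, $att_{(\dots,A,\dots)}^* D$ and $att_{(\dots,A',\dots)}^* D$ differ by an \emph{effective} boundary class on $\M_{0,\ell}$ (because $D$ is F-nef, so the relevant values of $f$ satisfy the inequalities \eqref{F-condition}), and iterate. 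Granting such a lemma, any $att_\lambda^* D$ is dominated below by some $att_\mu^* D$ with $\mu$ strict, and effectivity follows. I would also keep in mind the alternative of using Theorem \ref{thm:main theorem} / the construction of \S\ref{S:main-construction} directly: replace the repeated-part partition by the genuinely strict partition produced by the CRT choice \eqref{E:CRT} of the $a_i$, since that construction automatically yields distinct $a_i$ and reduces general pullbacks to a symmetric divisor on a larger $\M_{0,\mathbf A}$ whose relevant strata all correspond to strict partitions — this may be the slicker packaging, but it still requires tracking that effectivity descends along $att^*$.
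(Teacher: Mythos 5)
There is a genuine gap, on two counts. First, your central comparison is ill-posed: you propose to compare $att_\lambda^*D$ with $att_{\lambda'}^*D$ where $\lambda'$ is a strict partition of a \emph{larger} integer $n'=n+r$, but $D$ is a fixed class on $\M_{0,n}$, so ``$att_{\lambda'}^*D$'' for a partition of $n'$ has no meaning, and the hypothesis of the proposition only concerns pullbacks of this fixed $D$ along strict partitions of $n$ itself. The same objection applies to the fallback via Theorem \ref{thm:main theorem}: the symmetric divisor produced there lives on a huge $\M_{0,\mathbf A}$ and has no a priori relation to the effectivity hypothesis on $\M_{0,n}$. Second, once you retreat to partitions of $n$ with the same number of parts, the entire weight of the argument rests on your ``size-monotonicity / interpolation'' lemma (moving one marking between parts changes $att^*D$ by an effective boundary class), which you do not prove and which does not follow from F-nefness: the coefficient of $\Delta_{I,J}$ is $-f(\sum_{t\in I}|A_t|)$, and moving a marking shifts these arguments up for some $I$ and down for others, while \eqref{F-condition} is a convexity-type inequality, not a monotonicity statement; you also never specify in which direction the comparison should go, and ``effective boundary'' here means effective modulo Keel's relations, so even a coefficientwise comparison would not suffice without a certificate in the sense of Lemma \ref{L:boundary}.

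For contrast, the paper's proof compares a non-strict partition $\lambda$ (with $\lambda_{k-1}=\lambda_k$) not with a strict partition of the same length but with the \emph{coarser} partition $\mu=(\lambda_1,\dots,\lambda_{k-2},2\lambda_{k-1})$ of the same $n$, obtained by merging the two equal parts (Lemma \ref{L:lemma}, ``Ascent of effectivity''). Effectivity is transferred from $att_\mu^*D$ to $att_\lambda^*D$ via the weight-function criterion of Lemma \ref{L:boundary}: the weights $\widetilde w(\cdot,k-1)$ at the merged point are split in half between the two new points, and one sets $w(k-1,k)=f(\lambda_k)-\tfrac12 f(2\lambda_k)$; the only nontrivial verification (the case where the two equal parts are separated by the $2$-part partition) is exactly the F-inequality $\langle f,(A,B,a)\rangle\geq 0$ with $A+B+2a=n$. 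Induction on the number of parts, which terminates at strict partitions, then yields Proposition \ref{P:main}. If you want to salvage your approach, the statement you would need to prove is precisely this ascent lemma; the degeneration/limit and interpolation heuristics as written do not supply it.
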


Since $n$ has a strict partition of size $> k$ if only if $n\geq (k+1)(k+2)/2$, we obtain:
\begin{corollary}\label{Cor}
Suppose the strong F-conjecture holds for $\M_{0,m}$ for all $m\leq k$. Then the symmetric F-conjecture holds for $\M_{0,n}$ for all $n\leq (k+1)(k+2)/2-1$. 
\end{corollary}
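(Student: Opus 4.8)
The plan is to deduce Corollary \ref{Cor} as an immediate consequence of Proposition \ref{P:main} together with the ``Effective Dichotomy'' argument already cited in the excerpt, so the only real work is bookkeeping about strict partitions. First I would recall that, by the reformulation of the Strong F-conjecture stated in \S\ref{S:divisors}, the hypothesis that Conjecture \ref{Cstrong} holds for $\M_{0,m}$ for all $m\leq k$ says precisely that every F-nef divisor on such $\M_{0,m}$ is a stratally effective boundary, hence in particular an effective boundary. Now take $n\leq (k+1)(k+2)/2-1$ and let $D$ be an F-nef $S_n$-invariant divisor on $\M_{0,n}$; I want to show $D$ is nef, and by the ``Effective Dichotomy'' it suffices to show $D$ is a stratally effective boundary.

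The key combinatorial point is the stated equivalence: $n$ admits a strict partition into more than $k$ parts if and only if $n\geq 1+2+\cdots+(k+1)=(k+1)(k+2)/2$. Since we have assumed $n\leq (k+1)(k+2)/2-1$, every strict partition $n=a_1+\cdots+a_r$ with $a_1>a_2>\cdots>a_r\geq 1$ has $r\leq k$, so each part satisfies $a_i\leq n$ but, more importantly, the number of parts is at most $k$. Thus for any such strict partition the attaching map is $att_{(a_1,\dots,a_r)}\colon \M_{0,r}\to \M_{0,n}$ with $r\leq k$, and $att_{(a_1,\dots,a_r)}^*D$ is an F-nef divisor on $\M_{0,r}$ (pullbacks of F-nef divisors under attaching maps are F-nef, since F-curves in $\M_{0,r}$ push forward to F-curves in $\M_{0,n}$). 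By the hypothesis applied to $\M_{0,r}$ with $r\leq k$, this pullback is an effective boundary.

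Therefore $att_{(a_1,\dots,a_r)}^*D$ is an effective boundary for every strict partition of $n$, and Proposition \ref{P:main} then yields that $D$ is a stratally effective boundary, hence nef. Since $D$ was an arbitrary F-nef $S_n$-invariant divisor, Conjecture \ref{Csym0n} holds for $\M_{0,n}$, as claimed. I do not anticipate a genuine obstacle here: the content is entirely contained in Proposition \ref{P:main}, and the remaining steps are the elementary inequality on the size of strict partitions and the standard fact that F-nefness is preserved under pullback by attaching maps. The only point requiring a line of care is checking that $r\leq k$ for \emph{all} strict partitions (not just the longest one), which is immediate once one observes that a strict partition with $r$ parts forces $n\geq r(r+1)/2$, so $r(r+1)/2\leq n\leq (k+1)(k+2)/2-1<(k+1)(k+2)/2$ gives $r\leq k$.
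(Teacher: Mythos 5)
Your proposal is correct and follows exactly the paper's route: the paper also deduces the corollary from Proposition \ref{P:main} together with the observation that a strict partition of $n$ into more than $k$ parts forces $n\geq (k+1)(k+2)/2$, the pullback of an F-nef divisor under an attaching map being F-nef and the Effective Dichotomy being exactly the implicit ingredients. Your write-up merely makes these standard steps explicit, so there is nothing to add.
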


\subsection{{Proof of Proposition \ref{P:main}}}

The result follows from:
\begin{lemma}[Ascent of effectivity]\label{L:lemma} Let $D$ be an F-nef $S_n$-invariant divisor class on $\M_{0,n}$.
Suppose $n=\lambda_1+\cdots+\lambda_{k-1}+\lambda_k$ is a \emph{non-strict} partition 
of size $k$, say, with $\lambda_{k-1}=\lambda_k$.  
Set $\mu:=(\lambda_1,\dots,\lambda_{k-2}, 2\lambda_{k-1})\vdash n$.
If $b_{\mu}^* D$ is an effective boundary on $\M_{0,k-1}$, then $b_{\lambda}^*D$ is  an effective boundary on $\M_{0,k}$.
\end{lemma}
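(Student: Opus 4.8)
The plan is to relate the two pullbacks $b_\lambda^* D$ on $\M_{0,k}$ and $b_\mu^* D$ on $\M_{0,k-1}$ via a further attaching map. Concretely, write $\lambda = (\lambda_1,\dots,\lambda_{k-2},\lambda_{k-1},\lambda_k)$ with $\lambda_{k-1}=\lambda_k =: \ell$, and $\mu = (\lambda_1,\dots,\lambda_{k-2}, 2\ell)$. There is an attaching map $\alpha\colon \M_{0,k} \to \M_{0,k-1}$ that glues the last two marked points of $\M_{0,k-1}$'s source... more precisely, the map $\alpha = att_{(\{1\},\dots,\{k-2\},\{k-1,k\})}$ which takes a $(k-1)$-pointed curve and attaches a fixed rational tail carrying two of its points. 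Because $D$ is $S_n$-invariant, one checks the compatibility $b_\lambda^* D$ and $\alpha^* (b_\mu^* D)$ agree up to a correction supported on boundary: both are obtained by inserting rational tails of the prescribed sizes, and the only discrepancy comes from the extra node created in the $\mu$-tail of size $2\ell$ versus the two separate tails of size $\ell$. I would make this precise by computing both sides as functions on $\Sigma$ using the formula \eqref{E:divisor-intro}, or more cleanly by the intersection pairing with F-curves: for any F-curve $c$ on $\M_{0,k}$, compute $b_\lambda^* D \cdot c$ and $\alpha^*(b_\mu^* D)\cdot c$ via the pushforward of $c$ under the attaching maps and the intersection formula of \S\ref{S:curves}.

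The key step is then: since $b_\mu^* D$ is an effective boundary on $\M_{0,k-1}$ by hypothesis, its pullback $\alpha^*(b_\mu^* D)$ along the attaching map $\alpha$ is again an effective boundary on $\M_{0,k}$ — pullbacks of boundary divisors along attaching maps are effective combinations of boundary divisors (an attaching map sends boundary to boundary, and the exceptional contributions are boundary classes with nonnegative coefficients; this is exactly the mechanism behind the ``Effective Dichotomy'' cited in \cite{ian-mori}). Having established that $b_\lambda^* D = \alpha^*(b_\mu^* D) + (\text{boundary correction})$, I must show the correction term is itself effective boundary. This is where F-nefness of $D$ enters: the correction is a combination of the boundary divisors of $\M_{0,k}$ separating $\{k-1\}$ or $\{k\}$ from the rest, and its coefficients are controlled by values $f(\cdot)$ that, by the F-nef inequalities for $D$ restricted to the relevant F-curves (those with $\supp$ meeting the last two tails), are forced to have the right sign. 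Alternatively — and this is probably the slicker route — I would induct: $b_\lambda^* D$ itself is an F-nef $S_k$-symmetric... no, it is only F-nef, not symmetric; so instead I would restrict $b_\lambda^* D$ further to the boundary of $\M_{0,k}$ and use that the strong F-conjecture-type effectivity propagates from $b_\mu^* D$ plus the F-nef inequalities on the ``new'' F-curves that distinguish $\lambda$ from $\mu$.

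I expect the main obstacle to be pinning down the correction term precisely and proving its effectivity. The clean statement should be that $b_\lambda^* D$ equals $\alpha^*(b_\mu^* D)$ plus a nonnegative multiple of a single boundary divisor $\Delta_{\{k-1,k\}}$-type class on $\M_{0,k}$ (the divisor where the two equal-size tails bubble off together), and the multiple is exactly $\langle f, c\rangle \geq 0$ for the F-curve $c$ pinching off $\{k-1\}$, $\{k\}$, and two other marked points — so F-nefness of $D$ gives effectivity for free. Verifying that this is the \emph{only} correction, and getting its coefficient exactly right (including checking there are no further contributions hidden in the stabilization when $\lambda_{k-1}=1$, the delicate boundary case), will require a careful bookkeeping with Keel's relations \eqref{keel-1}–\eqref{keel-2} and the conventions of \S\ref{S:divisors}; this combinatorial verification is the heart of the argument, but it is routine once set up correctly.
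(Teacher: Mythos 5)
There is a genuine gap in the key step. The attaching map $\alpha = att_{(\{1\},\dots,\{k-2\},\{k-1,k\})}$ goes $\M_{0,k-1}\to\M_{0,k}$ (attaching maps increase the number of marked points), so the pullback $\alpha^*$ goes $\Pic(\M_{0,k})\to\Pic(\M_{0,k-1})$. The expression $\alpha^*(att_\mu^*D)$ on which your plan hinges is therefore ill-typed: $att_\mu^*D$ already lives on $\M_{0,k-1}$, which is the \emph{target} of $\alpha^*$, not its source. The true relation is $\alpha^*(att_\lambda^*D)=att_\mu^*D$ (because $att_\mu$ factors through $att_\lambda\circ\alpha$ as attaching maps, up to a deformation of the glued tails); in other words, $att_\mu^*D$ is the \emph{restriction} of $att_\lambda^*D$ to the boundary divisor $\Delta_{\{k-1,k\}}\cong\M_{0,k-1}$. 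The content of the lemma is thus to lift effective-boundary-ness from a boundary restriction back up to $\M_{0,k}$, which is the hard direction; the ``pullback of effective boundary is effective boundary'' fact you cite only goes the easy way and cannot be used here. Once this is corrected, the proposed decomposition ``$att_\lambda^*D = \alpha^*(att_\mu^*D) + (\text{one boundary class})$'' collapses, and the remaining sketch (restrict further to the boundary, induct) does not supply a mechanism for lifting.

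The paper's argument is of a different nature: it works entirely on the combinatorial side, via the Effective Boundary Lemma (Lemma \ref{L:boundary}, from \cite[Lemma 2.3.3]{semiample-ant}), which certifies that $-\sum b_{I,J}\Delta_{I,J}$ is effective boundary on $\M_{0,m}$ iff there is a weight function $w$ on pairs $\{i,j\}\subset[m]$ with $\sum_{i\in I,\,j\in J}w(i,j)\ge b_{I,J}$ for all $2$-partitions, with equality at the non-proper ones. Starting from a witness $\widetilde{w}$ for $att_\mu^*D$ on $[k-1]$, it builds a witness $w$ on $[k]$ by \emph{splitting} each $\widetilde{w}(i,k-1)$ evenly between the clone indices $k-1$ and $k$, keeping $\widetilde{w}$ on $\Sym^2[k-2]$, and setting $w(k-1,k)=f(\lambda_k)-\tfrac12 f(2\lambda_k)$. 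The verification is then a three-case check, and the single nontrivial case (where $k-1$ and $k$ land in different parts) uses exactly one instance of the F-nef inequality $\langle f,(A,B,\lambda_k)\rangle\ge 0$. Your slogan that F-nefness on the ``new'' F-curves is what supplies the missing effectivity is the right heuristic, but the proposal never identifies the splitting construction that makes it work.
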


Recall from \S\ref{S:divisors} that every divisor class $D$ on $\M_{0,m}$ can be written as
\begin{equation}\label{E:divisors}
D=-\sum_{I \sqcup J=[m]} b_{I,J} \Delta_{I,J}\ ,  
\end{equation}
where the sum is taken over all $2$-part partitions of $[m]:=\{1,\dots,m\}$. 

The ambiguity in writing $D$ as in \eqref{E:divisors} is completely described 
by Keel's relations \eqref{keel-1}-\eqref{keel-2} in $\Pic(\M_{0,m})$ (see \cite[Theorem 2.2(d)]{AC} and \cite{keel}). We use the following formulation:
\begin{lemma}[Effective Boundary Lemma {\cite[Lemma 2.3.3]{semiample-ant}}]\label{L:boundary} 
We have
\[
-\sum_{I\sqcup J=[m]} b_{I,J} \Delta_{I,J} = \sum_{I\sqcup J=[m]} c_{I,J}\Delta_{I,J} \in \Pic(\M_{0,m})\otimes \QQ.
\] 
if and only if there is a function $w\colon \Sym^2 \{1,\dots,m\}   \to \QQ$ 
such that for every $2$-part partition $I\sqcup J=[m]$, we have:
\begin{equation*}
\sum_{i\in I, j\in J} w(i,j) =c_{I,J}+b_{I,J}.
\end{equation*}
In particular, a divisor $D=-\sum_{I\sqcup J=[m]} b_{I,J} \Delta_{I,J}$
 is an effective boundary 
on $\M_{0,m}$ if and only if there exists a function $w$ such that 
\begin{align*}
\sum_{i\in I, j\in J} w(i,j)& \geq b_{I,J},
\end{align*}
for all partitions $I\sqcup J=[m]$, with equality holding for all non-proper partitions.
\end{lemma}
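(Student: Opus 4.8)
Here is how I would prove the Effective Boundary Lemma. The statement is a linear-algebra reformulation of Keel's presentation of $\Pic(\M_{0,m})$, and that is the route I would take; the small cases $m\leq 3$ are immediate (there are no proper partitions and $w$ is freely and uniquely pinned down by the non-proper equalities), so assume $m\geq 4$. First I would set up the free $\QQ$-vector space $V$ with one basis vector $[I,J]$ per $2$-part partition $I\sqcup J=[m]$ (with $[I,J]=[J,I]$), together with the linear surjection $\pi\colon V\to\Pic(\M_{0,m})\otimes\QQ$ sending $[I,J]\mapsto\Delta_{I,J}$, using as in \S\ref{S:divisors} the convention $\Delta_{\{k\},[m]\setminus\{k\}}=-\psi_k$ on the non-proper partitions. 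An element $e=(e_{I,J})\in V$ lies in $\ker\pi$ exactly when $\sum_{I\sqcup J}e_{I,J}\Delta_{I,J}=0$ in $\Pic(\M_{0,m})\otimes\QQ$; so, with $e_{I,J}:=b_{I,J}+c_{I,J}$, the first assertion of the lemma is precisely the claim that membership of $e$ in $\ker\pi$ is certified by a function $w$ with $\sum_{i\in I,\,j\in J}w(i,j)=e_{I,J}$ for every partition. By Keel \cite{keel} (equivalently, by the presentation $\Pic(\M_{0,m})=\Fun(\Sigma_m)/\Span(K_i,K_{i,j})$ recalled in \S\ref{S:divisors}, via the bijection $S\mapsto\{S,[m]\setminus S\}$ between $\Sigma_m$ and $2$-part partitions of $[m]$), $\ker\pi$ is the span of Keel's relations. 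This citation is the only non-formal input; everything else is bookkeeping.

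The engine of the proof is a change of spanning set for $\ker\pi$. For $1\leq i<j\leq m$, let $\delta_{i,j}\in V$ be the indicator of the partitions separating $i$ and $j$, that is $\delta_{i,j}=\sum_{\{I,J\}:\,i\in I,\,j\in J}[I,J]$. Applying $\pi$ and splitting off the two non-proper terms $\Delta_{\{i\},[m]\setminus\{i\}}=-\psi_i$ and $\Delta_{\{j\},[m]\setminus\{j\}}=-\psi_j$, Keel's relation \eqref{keel-2} gives $\pi(\delta_{i,j})=(\psi_i+\psi_j)-\psi_i-\psi_j=0$, so $\delta_{i,j}\in\ker\pi$. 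Conversely, tracing the bijection $S\mapsto\{S,[m]\setminus S\}$ (with $m$ always on the $J$-side) one checks the identities $K_i=\delta_{i,m}$ and $2K_{i,j}=K_i+K_j-\delta_{i,j}$ in $V$; hence $\Span(K_i,K_{i,j})=\Span_\QQ(\delta_{i,j}\colon 1\leq i<j\leq m)$ and therefore $\ker\pi=\Span_\QQ(\delta_{i,j})$. Consequently $e\in\ker\pi$ if and only if $e=\sum_{i<j}w(i,j)\delta_{i,j}$ for some $w(i,j)\in\QQ$, and evaluating the right-hand side at a partition $\{I,J\}$ gives $\sum_{i<j,\ \text{separated}}w(i,j)=\sum_{i\in I,\,j\in J}w(i,j)$, where we read $w$ as a function on unordered pairs, extended to $\Sym^2\{1,\dots,m\}$ by arbitrary (irrelevant) diagonal values. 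With $e_{I,J}=b_{I,J}+c_{I,J}$ this is exactly the asserted equivalence.

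For the ``in particular'' clause, recall $D=-\sum_{I\sqcup J}b_{I,J}\Delta_{I,J}$ is an effective boundary precisely when $D=\sum_{I\sqcup J\ \text{proper}}c_{I,J}\Delta_{I,J}$ for some $c_{I,J}\geq 0$. Given such a representation, extend $c$ by zero over the non-proper partitions and apply the equivalence just proved to obtain $w$ with $\sum_{i\in I,\,j\in J}w(i,j)=c_{I,J}+b_{I,J}$ for all $I\sqcup J$; on non-proper partitions the left side equals $b_{I,J}$ (equality), and on proper ones it equals $c_{I,J}+b_{I,J}\geq b_{I,J}$. Conversely, given $w$ with $\sum_{i\in I,\,j\in J}w(i,j)\geq b_{I,J}$ for all partitions and equality on non-proper ones, set $c_{I,J}:=\sum_{i\in I,\,j\in J}w(i,j)-b_{I,J}\geq 0$ on proper partitions and $c_{I,J}:=0$ on non-proper ones, so that $c_{I,J}+b_{I,J}=\sum_{i\in I,\,j\in J}w(i,j)$ holds for every partition; the equivalence then yields $D=\sum_{\text{proper}}c_{I,J}\Delta_{I,J}$, an effective boundary.

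The main obstacle is not conceptual but organizational: one must keep the $-\psi$ sign convention for the non-proper partitions perfectly straight while moving among $\Delta$-coordinates, $\psi$-coordinates, and the $\Sigma_m$-presentation, and one must notice that passing from the generators $K_{i,j}$ to the symmetric generators $\delta_{i,j}$ costs a factor of $2$, so that the identification $\ker\pi=\Span(\delta_{i,j})$, and hence the whole lemma, is genuinely a statement over $\QQ$ (or $\ZZ[1/2]$) rather than over $\ZZ$.
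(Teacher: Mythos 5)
The paper does not prove this lemma; it is cited verbatim from \cite[Lemma 2.3.3]{semiample-ant}, so there is no in-paper argument to compare against. Your proof is correct, self-contained, and natural given the framework of \S\ref{S:divisors}: you reduce the claim to Keel's presentation $\Pic(\M_{0,m})\otimes\QQ = \bigl(\Fun(\Sigma_m)/\Span(K_i,K_{i,j})\bigr)\otimes\QQ$ and then make the linear-algebraic translation explicit. The decisive move is the change of spanning set from $\{K_i,K_{i,j}\}$ to the symmetric generators $\delta_{i,j}$ (indicator of partitions separating $i$ and $j$): the verifications $K_i=\delta_{i,m}$ and $2K_{i,j}=K_i+K_j-\delta_{i,j}$ are correct, the counts match ($(m-1)+\binom{m-1}{2}=\binom{m}{2}$), and the factor of $2$ you flag is exactly why the equivalence must be stated over $\QQ$ rather than $\ZZ$. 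Evaluating $\sum_{i<j}w(i,j)\delta_{i,j}$ at $\{I,J\}$ indeed gives $\sum_{i\in I,\,j\in J}w(i,j)$, which is the content of the first assertion, and your derivation of the ``in particular'' clause from it (extend $c$ by zero on non-proper partitions in one direction; define $c_{I,J}:=\sum_{i\in I,j\in J}w(i,j)-b_{I,J}\geq 0$ on proper ones in the other) is sound. The only cosmetic point is your treatment of small $m$: for $m=3$ the three non-proper constraints do determine the off-diagonal values of $w$ uniquely (the $3\times 3$ system has determinant $\pm 2$), while the diagonal values are irrelevant, so ``freely and uniquely pinned down'' is slightly loose phrasing but not wrong; for $m\leq 2$ the moduli space is empty, so the case split should really read $m=3$ versus $m\geq 4$. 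None of this affects correctness.
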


\begin{proof}[Proof of Lemma \ref{L:lemma}]
Let $f\colon \ZZ_n \to \ZZ$ be an F-nef symmetric function such that 
\begin{equation}\label{E:L}
D=D(f)=-\sum_{I \sqcup J=[n]} f(\vert I\vert) \Delta_{I,J},
\end{equation}
where the sum is taken over all $2$-part partitions of $[n]$. 
If $D=\sum_{i=2}^{\lfloor n/2\rfloor} c_i \Delta_i$ in the standard basis of $\Pic(\M_{0,n})^{S_n}$, then we can take
$\tilde{f}(i)=\tilde{f}(n-i)=-c_i$ for all $i=2,\dots, \lfloor n/2\rfloor$, and $\tilde{f}(0)=\tilde{f}(1)=\tilde{f}(n-1)=0$, 
and then $f=\tilde{f}+c A_n$, where $A_n$ is the standard function on $\ZZ_n$ defined in \eqref{E:standard} and $c\gg 0$.

The function $f$ is a symmetric F-nef function on $\ZZ_n$ that satisfies the 
F-inequality: 
\begin{equation}\label{E:F-nef}
\langle f, (a,b,c)\rangle=f(a)+f(b)+f(c)+f(d)- f(a+b)-f(b+c)-f(a+c) \geq 0,
\end{equation}
for all $a,b,c\in \ZZ_n$.

By the assumption, 
\[
att_{\mu}^*D 
=
- \sum_{I\sqcup J=[k-1]} f\bigl(\sum_{t\in I} \mu_t\bigr) \Delta_{I,J} 
\] is an effective boundary on $\M_{0,k-1}$. 
By Lemma \ref{L:boundary}, there is a function $\widetilde{w}\colon \Sym^2 [k-1] \to \QQ$ 
such that for every $2$-part partition $I\sqcup J=[k-1]$, we have:
\begin{equation}\label{E:tilde-lambda}
\sum_{i\in I, j\in J} \widetilde{w}(i,j) \geq f(\sum_{t\in I} \mu_t),
\end{equation}
with equality holding for 
all non-proper partitions. 

Define $w\colon \Sym^2 [k]  \to \QQ$ by 
\begin{align*}
w(i,j)&=\begin{cases}
\widetilde{w}(i,j) & \text{if $i,j\in \{1,\dots, k-2\}$} \\
\widetilde{w}(i,j)/2 & \text{if $i\in \{k-1,k\}$ and $j\notin \{k-1,k\}$} 
\end{cases} \\
w(k-1,k)&=f(\lambda_k)-\frac{1}{2}f(2\lambda_{k})=f(\lambda_k)-\frac{1}{2}f(\mu_{k-1}). 
\end{align*}
Since
$
att_{\lambda}^*L = - \sum_{I\sqcup J=[k]} f\bigl(\sum_{t\in I} \lambda_t\bigr) \Delta_{I,J} \ , 
$
Lemma \ref{L:boundary} implies that $att_{\lambda}^*L$ is an effective boundary on $\M_{0,k}$ once we establish the following:
\begin{claim}\label{C:claim} For all $2$-part partitions $I\sqcup J=[k]$, we have
\begin{equation}\label{E:lambda}
\sum_{i\in I, j\in J} w(i,j) \geq f(\sum_{t\in I} \lambda_t), 
\end{equation}
with equality holding for all non-proper partitions. 
\end{claim}


We consider three cases:

\subsubsection*{Case 1:} If $k-1$ and $k$ belong to the same part, say $J$, then 
\[
\sum_{i\in I, j\in J} w(i,j) = \sum_{i\in I, j\in \{1,\dots,k-1\}\setminus I } \widetilde{w}(i,j) \geq f(\sum_{t\in I} \mu_t)= f(\sum_{t\in I} \lambda_t),
\]
where we used Inequality \eqref{E:tilde-lambda}. In particular, the equality holds when $I$ is a singleton.

\subsubsection*{Case 2:} If $J=\{k-1\}$, or $J=\{k\}$,
\[
\sum_{i\in I, j\in J} w(i,j)=w(k-1,k)+\frac{1}{2}\sum_{j\in \{1,\dots,k-2\}} \widetilde{w}(j, k-1)
=f(\lambda_k)-\frac{1}{2}f(2\lambda_{k}) +\frac{1}{2}f(\mu_{k-1})=f(\lambda_k).
\]

\subsubsection*{Case 3:} Suppose $I=I'\cup \{k-1\}$ and $J=J'\cup \{k\}$, where $I'\sqcup J'=\{1,\dots, k-2\}$. Then
\begin{multline}\label{E:m}
\sum_{i\in I, j\in J} w(i,j)=f(\lambda_k)-\frac{1}{2}f(2\lambda_{k})+\frac{1}{2}\sum_{i\in I', j\in J'\cup \{k-1\}} \widetilde{w}(i,j)
+\frac{1}{2}\sum_{i\in I'\cup \{k-1\}, j\in J'} \widetilde{w}(i,j) \\
\geq f(\lambda_k)-\frac{1}{2}f(2\lambda_{k})+\frac{1}{2}f\bigl(\sum_{t\in I'} \lambda_t\bigr)+\frac{1}{2}f\bigl(\sum_{t\in J'} \lambda_t\bigr),
\end{multline}
where we used Inequality \eqref{E:tilde-lambda}.

Denoting $A:=\sum_{t\in I'} \lambda_t$, $B:=\sum_{t\in J'} \lambda_t$, and $a:=\lambda_{k-1}=\lambda_k$, we have $A+B+2a=n$, and \eqref{E:m}
translates into 
\[
\sum_{i\in I, j\in J} w(i,j) \geq f(a)-\frac{1}{2}f(2a)+\frac{1}{2}f(A)+\frac{1}{2}f(B).
\]
Applying the F-inequality \eqref{E:F-nef}: 
\[
\langle f, (A,B,a)\rangle = f(A)+f(B)+2f(a)-2f(A+a)-f(2a)\geq 0,
\] we conclude that
\[
\sum_{i\in I, j\in J} w(i,j) \geq f(A+a)=f\bigl( \sum_{t\in I} \lambda_t\bigr), \quad \text{as desired}.
\]

\end{proof}

\section{(Sketch of) Proof of Theorem \ref{n=8}}

Our proof of the Strong F-conjecture for $n\leq 8$ relies on computer computations. We first introduce the terminology and then the strategy of this computation.

\subsection{Curves on $\M_{0,n}$ and pairwise balanced designs}
\begin{defn}
A pairwise
balanced design (or PBD) of index $r$ on $[n-1]=\{1,\dots,n-1\}$
is a generalized multiset\footnote{meaning that we allow negative coefficients.} $\mathcal P=\{S_1,\dots, S_b\}$ of proper subsets of $[n-1]$, called blocks, 
such that every pair of distinct elements of $[n-1]$ belongs to exactly $r$
blocks. A PBD is \emph{effective} if all coefficients are non-negative.
\end{defn}

    A PBD $\mathcal{P}$ of index $r$ corresponds to a curve class in $\M_{0,n}$, together with a distinguished element of $[n]$, as follows:
    
    Take a curve $C$ in $\M_{0,n}$, with a distinguished $n^{th}$ point. Take $r=\psi_n$. Then the multiset
    \[
    \mathcal{P}_C:=\{ (\Delta_{S}\, \cdot  C) \times S \mid S\in \Sigma_n\}
    \]
    is a PBD of index $r$ on $[n-1]$. (Namely, we take the multiplicity of $S$ in the PBD to be the intersection number $\Delta_S \cdot C$.) For  $i\in [n-1]$, we have $\psi_i\cdot C=(x_i-r)$, where $x_i$ is the degree of $i$, defined to be the number of blocks containing $i$.

This defines a finite-to-one correspondence between curve classes in $\M_{0,n}$ and PBDs as every PBD $\mathcal{P}$ defines a curve class $C_{\mathcal{P}}$ in $N_1(\M_{0,n})$ via 
\[
 \Delta_S \cdot C_{\mathcal{P}}=\text{ the multiplicity of $S$ in $\mathcal{P}$}. 
\]

For example, an F-curve $c_{X,Y,Z}$ with $X\cup Y\cup Z\neq [n-1]$ corresponds to an index $0$ PBD 
\[
\mathcal P=\{X\cup Y, X\cup Z, Y\cup Z, -X, -Y, -Z, -(X\cup Y \cup Z)\},
\]
and an F-curve $c_{X,Y,Z}$ with $X\cup Y\cup Z= [n-1]$ corresponds to an index $1$ PBD 
\[
\mathcal P=\{X\cup Y, X\cup Z, Y\cup Z, -X, -Y, -Z\}.
\]

An effective PBD corresponds to a curve class that intersects all boundary divisors non-negatively. We also call such a curve class an \emph{effective PBD}.

Effective PBDs on $[n-1]$ form a monoid in $\ZZ^{2^{n-1}-1}$ (or $\ZZ^{2^{n-1}-n-1}$ if we ignore blocks of sizes $1$ and $n-1$). The associated convex cone in $\RR^{2^{n-1}-n-1}$ will be called the $\PBDcone(n)$. 
There are finitely many extremal rays of $\PBDcone(n)$ (corresponding to the generators of the monoid), but in practice the computation does not complete in finite time. 
We call these generators \emph{extremal effective PBD}s. 
For example, up to $S_6$-symmetry, there are $22$ extremal effective PBDs in $\PBDcone(6)$ of index ranging from $1$ to $8$.

$\PBDcone(n)$ is precisely the dual of the effective boundary cone, that is, the cone spanned
by the classes of the boundary divisors in $\Pic(\M_{0,n})$.
The dual version of Conjecture \ref{Cstrong} becomes:
\begin{conjecture}[Dual Strong F-conjecture] \label{DualCstrong} Take $n\leq 11$.
Then $\PBDcone(n)$ is contained in the convex cone generated by the classes of F-curves in $N_1(\M_{0,n})$. That is, every extremal effective PBD on $[n-1]$ is an effective combination of F-curves. 
\end{conjecture}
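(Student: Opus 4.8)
The plan is to pass to the dual formulation, settle small $n$ by known genus-$0$ results, and reduce the rest to a finite computation. First, the convex-duality reduction: the effective boundary cone $\mathrm{Eff}(n):=\cone\bigl(\{\Delta_{I,J}\}\bigr)\subset\Pic(\M_{0,n})\otimes\RR$ is finitely generated, hence closed, and $\PBDcone(n)$ is by construction its dual in $N_1(\M_{0,n})$, so biduality gives $\PBDcone(n)^\vee=\mathrm{Eff}(n)$. Writing $F(n)\subset N_1(\M_{0,n})$ for the cone generated by the F-curves, its dual $F(n)^\vee$ is the cone of F-nef divisors. Dualizing the asserted inclusion $\PBDcone(n)\subseteq F(n)$ turns it into $F(n)^\vee\subseteq\PBDcone(n)^\vee$, i.e.\ every F-nef divisor on $\M_{0,n}$ is $\QQ$-linearly equivalent to an effective combination of boundary divisors, which is exactly the Strong F-conjecture \ref{Cstrong} for $\M_{0,n}$. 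Hence Conjecture \ref{DualCstrong} is equivalent to the assertion that Conjecture \ref{Cstrong} holds for every $n\le 11$.

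For $n\le 8$ this is known: the Strong F-conjecture is proved for $n\le 6$ in \cite{FG}, for $n=7$ in \cite{larsen} (and reproved here), and for $n=8$ in Theorem \ref{n=8}. By the equivalence just established, $\PBDcone(n)\subseteq F(n)$ for all $n\le 8$.

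For the remaining values $n=9,10,11$ I would run the computation underlying Theorem \ref{n=8} in larger size. Concretely: enumerate the extremal rays of the polyhedral cone $\PBDcone(n)$ --- the extremal effective PBDs on $[n-1]$ --- working modulo the $S_{n-1}$-action (and, after dualizing back to divisors, modulo $S_n$); then, for each extremal effective PBD $\mathcal P$, decide by a rational linear program whether the curve class $C_{\mathcal P}$ is a non-negative combination of the explicitly listed index-$0$ and index-$1$ F-curves of $\M_{0,n}$. If every extremal effective PBD passes this test, then $\PBDcone(n)\subseteq F(n)$ for that $n$, completing the proof.

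The main obstacle is that the naive extremal-ray enumeration of $\PBDcone(n)$ does not complete in practice, while the ambient dimension $2^{n-1}-n-1$ grows quickly (it is $1012$ for $n=11$). The real work is therefore to make the search finite and tractable: prove an a priori bound on the index $r$ of an extremal effective PBD in terms of $n$, so the monoid generators live in a bounded box and can be enumerated directly; exploit the symmetry group aggressively to collapse orbits before running the linear programs; and, for the stubborn cases, argue inductively by restricting a hypothetical extremal effective PBD not lying in $F(n)$ to a boundary stratum, producing a smaller bad extremal PBD on some $\M_{0,m}$ with $m<n$ and contradicting the cases already settled. I expect $n=9$ and $n=10$ to come within reach once the index bound and symmetry reduction are in place, with $n=11$ the genuine computational bottleneck.
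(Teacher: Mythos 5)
The statement you were asked to prove is labeled as a \emph{conjecture} in the paper and remains genuinely open for $n = 9, 10, 11$; the paper never claims a proof of those cases. What the paper actually establishes is the duality framing itself --- $\PBDcone(n)$ is the dual of the cone spanned by boundary divisors, so the Dual Strong F-conjecture is equivalent to Conjecture~\ref{Cstrong} --- together with the resolution of the cases $n\le 8$ via Theorem~\ref{n=8} and the prior results of \cite{FG} and \cite{larsen}. Your biduality argument (the effective boundary cone is finitely generated, hence closed; its double dual is itself; the dual of the F-curve cone is the F-nef cone; therefore $\PBDcone(n)$ lies inside the F-curve cone precisely when every F-nef divisor is an effective boundary) is exactly the intended reading, and your treatment of $n\le 8$ by citation is correct and matches the paper.

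Where you diverge is in treating $n = 9, 10, 11$ as cases to be finished. The paper does not attempt them; the cutoff $n\le 11$ in the conjecture's statement comes from the other direction, namely Pixton's $(11,5,2)$-biplane giving a counterexample to Conjecture~\ref{Cstrong} at $n = 12$. Your sketched plan (bound the index of extremal effective PBDs, exploit symmetry, restrict a hypothetical bad PBD to a boundary stratum to induct on $n$) is not carried out, and the last step in particular is not automatic: it is not clear that an extremal effective PBD on $[n-1]$ that fails to lie in the F-curve cone specializes to a \emph{smaller} extremal effective PBD with the same defect, so the induction would need a genuine argument. In short, your proposal correctly reproduces the paper's content for $n\le 8$ and its duality reduction, but the cases $n = 9, 10, 11$ are open in the paper and remain open in your write-up; you should present them as conjectural rather than as a proof in progress.
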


We note that Pixton's counterexample \cite{pixton} to the Strong F-conjecture \ref{Cstrong} for $n=12$ is given by a
$(11, 5, 2)$-biplane which is an index $2$ PBD on $11$ elements with $11$ blocks of size $5$.

Theorem \ref{n=8} follows from the following dual
statement:

\begin{thm}
$\PBDcone(8)$ is contained in the convex cone generated by the classes of F-curves in $N_1(\M_{0,8})$. 
\end{thm}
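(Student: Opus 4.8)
The plan is to reduce the statement to a finite, computer-verifiable linear-algebra problem over $\QQ$. Concretely, $\PBDcone(8)$ is a rational polyhedral cone in $\RR^{2^{7}-7-1}=\RR^{120}$ cut out by the finitely many inequalities $\Delta_S \cdot C \geq 0$ (one for each proper $S\subset[7]$, i.e.\ for each block size between $2$ and $6$), together with the linear relations that identify a PBD with a genuine curve class in $N_1(\M_{0,8})$ (equivalently, the conditions that the intersection numbers against Keel's relations $K_i, K_{i,j}$ vanish, which over $[7]$ is automatic from the PBD definition). On the other side, the cone generated by F-curves is the convex hull of the finitely many classes $c_{X,Y,Z}$ for pairwise-disjoint nonempty $X,Y,Z\subseteq[7]$; there are only a few hundred of these up to $S_8$-symmetry. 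So the assertion "$\PBDcone(8) \subseteq \cone(\text{F-curves})$" is the containment of one explicitly presented rational polyhedral cone in another, which is decidable.

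The steps I would carry out, in order: \textbf{(1)} Enumerate the F-curve classes $c_{X,Y,Z}$ in $N_1(\M_{0,8})$ as vectors in the dual of $\Pic(\M_{0,8})$ — using the PBD encoding above, each is a $\pm 1$-vector supported on at most seven blocks. \textbf{(2)} Compute the \emph{extremal rays} of $\PBDcone(8)$, i.e.\ a minimal generating set of the monoid of effective PBDs on $[7]$; this is a double-description / Fourier–Motzkin vertex-enumeration run on the $120$ nonnegativity inequalities. As the authors note, naive vertex enumeration "does not complete in finite time," so one exploits the $S_7$- (really $S_8$-) symmetry to enumerate orbits of extremal rays, and/or bounds the index $r$ a priori (the relevant PBDs have bounded index, as in the $n=6$ case where $r\le 8$) to make the enumeration terminate. \textbf{(3)} For each extremal effective PBD $\mathcal P$ found in step (2), solve the linear program: find nonnegative rationals $\{\lambda_{X,Y,Z}\}$ with $\sum \lambda_{X,Y,Z}\, c_{X,Y,Z} = C_{\mathcal P}$ in $N_1(\M_{0,8})$. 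If every such LP is feasible, the containment holds; since a cone contains another iff it contains all its extremal rays, we are done. \textbf{(4)} Translate back: by the Effective Dichotomy and the duality $\PBDcone(n)^\vee = $ (effective boundary cone), containment of cones in $N_1$ is exactly the statement that every F-nef divisor on $\M_{0,8}$ is an effective combination of boundary divisors, which is Theorem \ref{n=8}.

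The main obstacle is unquestionably step (2): the raw vertex enumeration of a cone in $\RR^{120}$ defined by $120$ facet inequalities is far beyond what Fourier–Motzkin or the double-description method can handle directly, and the number of extremal rays, while finite, is large. The workable route is to enumerate extremal effective PBDs up to the $S_8$-action — reducing $120$ coordinates to a manageable number of orbit-representatives — and to cap the index $r$ by an explicit bound proven in advance (each extremal effective PBD on $7$ points, being a vertex of a cone cut out by small-coefficient inequalities, has index bounded by an effectively computable constant, so one only searches finitely many $r$). This is precisely the kind of "computer computation" the authors flag, and it is where essentially all the work — and all the risk of combinatorial explosion — lies; steps (1), (3), (4) are routine once (2) delivers a complete list of extremal rays. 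A secondary, minor obstacle is certifying the output rigorously: the feasibility certificates in step (3) are explicit nonnegative rational vectors and are trivially checkable by exact arithmetic, but the \emph{completeness} of the extremal-ray list in step (2) must itself be certified (e.g.\ by checking that every inequality is tight on enough listed rays, or via an independent H-to-V consistency check), and one should record this certificate alongside the computation.
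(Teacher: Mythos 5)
There is a genuine gap, and it sits exactly where you locate ``essentially all the work'': your step (2). Your plan is the naive dual strategy --- enumerate the extremal rays of $\PBDcone(8)$ and then certify each one as a nonnegative combination of F-curves by linear programming. The paper explicitly rejects this route as infeasible: the authors report that, even up to symmetry, $\PBDcone(7)$ already has $59448$ extremal-ray orbits, and that ``in practice the computation does not complete'' --- for $n=8$ a double-description/vertex-enumeration run in a space of dimension $2^{7}-8-1=119$ (note: not $120$) with that inequality structure is out of reach. Your proposed rescues are not substantiated: the bound $r\leq 8$ for $n=6$ is an \emph{observed output} of a completed enumeration, not an a priori cap, and the ``effectively computable constant'' you invoke for $n=8$ would come from subdeterminant (Hadamard-type) bounds on $\sim 118\times 118$ minors, which is astronomically large and does not make an index-by-index search finite in any practical sense. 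So as written, the proposal reduces the theorem to a computation that you have neither performed nor shown to be performable, which for a computer-assisted theorem is the whole content.

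The paper's actual proof avoids extremal-ray enumeration altogether. Writing $Q=\PBDcone(8)$ and $P$ for the F-curve cone, it introduces the notion of a \emph{critical} subset $J$ of the set of boundary divisors ($J$ is critical if some nonzero $v\in P$ pairs nonpositively with every boundary divisor outside $J$), and proves a general proposition: if $Q\not\subset P$ (with the relevant nondegeneracy hypotheses), then there is an extremal ray $x\in Q\setminus P$ whose \emph{support} is not critical --- the proof is a minimality argument using the functional $q=\psi-\delta$, which pairs to $1$ with every F-curve, to normalize and strictly decrease a witness pairing after subtracting $\varepsilon v$. This shifts the computation from rays to supports: the computer check is that for $n\leq 8$ every subset $J$ of boundary divisors is either critical or supports no effective PBD, which is a far smaller and structurally different verification than your steps (2)--(3). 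If you want to salvage your outline, you either need to actually produce (and certify the completeness of) the extremal-ray list for $\PBDcone(8)$ --- which is exactly what the authors could not do --- or replace step (2) with an idea like the criticality trick; the latter is the missing ingredient.
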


\subsection{Linear programming}
Let $V$ be a $\BR$-vector space and $V^*$ its dual. Suppose we are given two convex cones $P, Q\subset V$ described as follows:
\[
P=\BR_+\text{-span}\bigl(\alpha_i \mid i=1,\dots,m)\qquad(\alpha_i\in V\setminus\{0\}\bigr),
\]
\[
Q = \{x\in X \mid \ (x,\beta_j)\geq 0,\;j=1,\dots, \rho\}\qquad(\beta_j\in V^*\setminus\{0\}).
\]
Suppose that we need to decide whether $Q\subset P$. The statement $Q\not\subset P$ is clearly equivalent to the existence of ``witnesses'' $x\in X,y\in X^*$ such that for all $i,j$
\[
(x,\beta_j)\geq 0,\qquad (y,\alpha_i)\geq 0,\qquad(x,y)<0.
\]
Generally, the problem of finding $x,y$ is NP-hard, see \cite{linprog}.\footnote{We are grateful to Yurii Malitskyi for the reference.}

In $\M_{0,n}$ terms, we take $V$ to be the vector space of curve classes, $V^*$ the vector space of divisor classes,  $\alpha_i's$ to be the F-curves and $\beta_j's$ to be the boundary divisors. Then we want 
\[
Q:=\PBDcone(n) \subset P:=\text{the cone of F-curves}.
\]

We call a cone non-degenerate if it does not contain a line and is not contained in a hyperplane. Without loss of generality we may assume that $P$ and $Q$ are non-degenerate.

Suppose that we want to show that $Q\subset P$. In principle, one could find all extremal rays of $Q$ and verify if each of them belongs to $P$, but the number of extremal rays may be very large.\footnote{For example, we computed that up to $S_n$-symmetry, the number of extremal rays of $\PBDcone(n)$ is $22$ for $n=6$ and $59448$ for $n=7$.} We propose a trick to cut off some rays.

\begin{defn}
    Suppose $x$ spans an extremal ray of $Q$. Its \emph{support} $\supp{x}\subset [\rho]$ is defined by
    \[
    \supp{x} = \{j\in[\rho] \;|\; (x,\beta_j)>0\}
    \]
\end{defn}

The support of a PBD is simply the underlying set of the multiset.
\begin{defn}
    A subset $J\subset [\rho]$ is called \emph{critical} if there exists $v\in P\setminus\{0\}$ such that for all $j\notin J$ we have $(v,\beta_j)\leq 0$.
\end{defn}
Clearly, any set containing a critical set is also critical.
\begin{remark}
In $\M_{0,n}$ terms, a subset  $J\subset [\rho]$ is critical if there exists an effective combination of F-curves that intersect all divisors not in $J$ non-positively. If the support of an effective PBD is critical, then we can subtract a small effective combination of F-curves and still get an effective PBD.
\end{remark}

Note that if $Q\cap P$ is degenerate then the problem can be easily solved by picking any element $x$ in the interior of $Q$ as a witness, $x\notin Q$. By duality we may also assume that $Q+P$ is not degenerate.
\begin{prop}
    Assume $P, Q, P\cap Q, P+Q$ are not degenerate.
    If $Q\not\subset P$ then there exists an extremal ray $x\in Q\setminus P$ such that $\supp{x}$ is not critical.
\end{prop}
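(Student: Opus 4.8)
The plan is to establish the contrapositive‑type statement: assuming that \emph{no} extremal ray of $Q$ lying outside $P$ has non-critical support, I would derive $Q\subseteq P$, contradicting $Q\not\subset P$. First a purely formal reduction. The contrapositive of ``any set containing a critical set is critical'' is that any subset of a non-critical set is non-critical. Hence, given an arbitrary $x\in Q\setminus P$, if I write $x$ as a positive combination of the extremal rays of the smallest face of $Q$ containing it, then at least one such ray $z$ is not in $P$ (since $P$ is a cone), and $\supp(z)\subseteq\supp(x)$ because all the pairings $(z,\beta_j)$ are non-negative; so non-criticality of $\supp(x)$ forces non-criticality of $\supp(z)$, giving the required extremal ray $z\in Q\setminus P$. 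Therefore it suffices to produce \emph{one} point of $Q\setminus P$ with non-critical support. Negating, I assume from now on that \emph{every} point of $Q\setminus P$ has critical support, and I aim to conclude $Q\subseteq P$.

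Next I would set up the polyhedral geometry. The non-degeneracy hypotheses guarantee that $P$, $Q$ and $P+Q$ are pointed; in particular $Q\cap(-P)=\{0\}$, and there is a linear functional $\phi\in V^*$ with $\phi>0$ on $(P+Q)\setminus\{0\}$, hence $\phi>0$ on $P\setminus\{0\}$ and on $Q\setminus\{0\}$. Fix $y_0\in Q\setminus P$ and set $\Pi:=Q\cap(y_0-P)$. Its recession cone is $Q\cap(-P)=\{0\}$, so $\Pi$ is a polytope; and since $0$ is a vertex of the pointed cone $P$, the point $y_0$ is a vertex of $y_0-P$ and therefore of $\Pi$. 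The elementary fact I would use repeatedly is: if $y\in\Pi\setminus P$ and $v\in P$ with $y-v\in Q$, then $y-v\in\Pi\setminus P$ — it lies in $Q$; in $y_0-P$ because $(y_0-y)+v\in P$; and not in $P$, since otherwise $y=(y-v)+v\in P$.

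The core of the proof is then a \emph{finite descent through the vertices of $\Pi$}. Starting from the vertex $y_0\in\Pi\setminus P$ I would build vertices $y_0,y_1,y_2,\dots$ of $\Pi$, all outside $P$, with $\phi(y_0)>\phi(y_1)>\phi(y_2)>\cdots$. Given a vertex $y_k\in\Pi\setminus P$, criticality of $\supp(y_k)$ means the cone $C_k:=\{v\in P:(v,\beta_j)\le 0\text{ for all }j\notin\supp(y_k)\}$ is non-zero. Every $v\in C_k$ is a feasible direction of $\Pi$ out of $y_k$ (for small $t>0$ one has $y_k-tv\in Q$ and $y_k-tv\in y_0-P$), so $-C_k$ is a non-zero subcone of the feasible-direction cone of $\Pi$ at $y_k$, which is pointed because $y_k$ is a vertex. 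I would choose an edge direction $-v_k$ of $\Pi$ inside $-C_k$, follow it maximally within $Q$ (equivalently within $\Pi$, as the constraint $y_0-P$ is automatically preserved) to the adjacent vertex $y_{k+1}=y_k-t^{*}v_k$ with $t^{*}>0$, observe $y_{k+1}\notin P$ by the fact above applied along the whole segment, and note $\phi(y_{k+1})=\phi(y_k)-t^{*}\phi(v_k)<\phi(y_k)$ since $v_k\in P\setminus\{0\}$. As $\Pi$ has finitely many vertices and $\phi$ decreases strictly, this walk must terminate; but it never gets stuck, since each $y_k\in\Pi\setminus P$ again has critical support. This contradiction yields $Q\subseteq P$, and hence the proposition.

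I expect the main obstacle to be the claim that the non-zero cone $-C_k$ contains an edge direction of $\Pi$ at the vertex $y_k$ — this is what keeps the iterates at honest vertices and makes the finiteness argument go through. It is a simplex-method-flavoured assertion, which I would handle either by a preliminary generic perturbation of $\Pi$ and $\phi$ (reducing to a simple polytope) or by taking $v_k$ along an extremal ray of $C_k$ and comparing the constraints active at $y_k$ with those cutting out that ray. The non-degeneracy of $P\cap Q$ (whose ``no line'' clause is in fact already forced by that of $Q$) and the full-dimensionality clauses are used only to ensure that $\Pi$ and the auxiliary cones stay non-degenerate throughout.
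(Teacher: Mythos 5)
Your opening reduction (pass from a point of $Q\setminus P$ to an extremal ray of the minimal face of $Q$ containing it, observing that the support can only shrink) is exactly the paper's closing step, and is fine. But the main engine of your argument --- a vertex-to-vertex descent on the polytope $\Pi=Q\cap(y_0-P)$ --- is a genuinely different construction from the paper's, and the obstacle you flag in the last paragraph is a real gap that your proposed fixes do not close.

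The problem is precisely the one you name: a non-zero subcone of a pointed cone need not contain any of its extreme rays. This already fails for simplicial cones --- the ray through $(1,1,1)$ inside the positive octant of $\RR^3$ avoids all three coordinate axes --- so reducing to a simple polytope by generic perturbation does not help: at a simple vertex the tangent cone is simplicial, and $-C_k$ can still lie in its relative interior. Note moreover that $C_k$ is determined by $P$ and the $\beta_j$ alone, not by $\phi$ or the local geometry of $\Pi$, so perturbing $\phi$ or $\Pi$ cannot force $-C_k$ onto an edge. The second suggested fix (follow an extremal ray of $C_k$ and compare active constraints) is not developed enough to evaluate, and I do not see how to complete it: the maximal feasible point $y_k-t^{*}v_k$ need not be a vertex, and while it lies in $\Pi\setminus P$ with smaller $\phi$, the low-$\phi$ vertices of its minimal face may all lie in $P$, so there is no obvious new vertex outside $P$ from which to continue. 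The descent can drift toward $\partial P$ without ever terminating at a vertex, and the finiteness argument collapses.

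The paper avoids iteration entirely by combining your potential with a normalization. Fix a witness $y\in P^{\vee}$ (so $(y,\alpha_i)\geq 0$ and $(x_0,y)<0$ for some $x_0\in Q$) and a functional $q$ that is strictly positive on $P\setminus\{0\}$ and is a strictly positive combination of the $\beta_j$. Minimize $(x,y)$ over the compact polytope $\{x\in Q : (x,q)=1\}$ --- your $\phi$ is playing the role of $q$, but as a normalizing constraint rather than as an objective. If the minimizer $x^{*}$ had critical support, pick $v\in P\setminus\{0\}$ with $(v,\beta_j)\leq 0$ for $j\notin\supp(x^{*})$; then $x^{*}-\varepsilon v\in Q$, $(x^{*}-\varepsilon v,y)\leq(x^{*},y)<0$, and $(x^{*}-\varepsilon v,q)\in(0,1)$. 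Rescaling back to $(x,q)=1$ divides the negative number $(x^{*}-\varepsilon v,y)$ by something in $(0,1)$, so the value \emph{strictly} decreases --- contradiction. This normalization step is exactly the device your argument lacks: it converts ``does not increase'' into ``strictly decreases'' in one shot, with no pivot rule needed.
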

\begin{proof}
    Choose $q\in X^*$ such that $(q,\alpha_i)>0$ for all $i$ and $q$ is in the (strictly) positive span of $\beta_j$. 
    Suppose a pair $x\in X,y\in X^*$ is a witness in the sense above. By rescaling we may assume $(x,q)=1$. Suppose $(x,y)$ is smallest possible. Suppose $\supp(x)$ is critical. Then there exists non-zero $v\in P$ such that $x-\varepsilon v$ lies in $Q$ for sufficiently small $\varepsilon$. We have
    \[
    (x-\varepsilon v, q)<(x,q)=1,
    \]
    \[
    (x-\varepsilon v,y)\leq (x,y).
    \]
    Let $x'=\dfrac{x-\varepsilon v}{(x-\varepsilon v, q)}$. Then
    \[
    (x',q)=1,\qquad(x',y)<(x-\varepsilon v,y)\leq(x,y),
    \]
    which contradicts the minimality of $(x,y)$.
    So $\supp(x)$ is not critical. The support of any extremal ray of the facet of $Q$ containing $x$ is a subset of $\supp(x)$ and therefore is not critical. At least one of these extremal rays has to lie outside $P$, and so satisfies the conditions. 
\end{proof}
\begin{remark}
The divisor $\psi-\delta$ is a natural choice for $q$. 
In fact, $\psi-\delta$ pairs to $1$ with every F-curve.
\end{remark}

The proof of Theorem \ref{n=8} is now finished with the following
result, whose proof is made by computer verification:
\begin{prop} For $n\leq 8$, every subset $J\subset [\rho]$ is either critical or does not support an effective PBD.
\end{prop}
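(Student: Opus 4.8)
The plan is to turn the Proposition into a finite list of rational linear-programming feasibility questions and discharge them by machine, working modulo $S_8$. Write $\beta_1,\dots,\beta_\rho$ (with $\rho=119$) for the boundary divisors of $\M_{0,8}$, $\alpha_1,\dots,\alpha_m$ for the F-curves, and $P$ for the cone they span. For a subset $K\subseteq[\rho]$ put $F_K:=\PBDcone(8)\cap\{x : (x,\beta_j)=0\text{ for all }j\in K\}$, and call $K$ \emph{blocking} if there is no $v\in P\setminus\{0\}$ with $(v,\beta_j)\le 0$ for all $j\in K$. With $J:=[\rho]\setminus K$ one has the dictionary: $J$ is critical if and only if $K$ is not blocking; and $J$ supports no effective PBD if and only if $F_K=\{0\}$, because an effective PBD with support contained in $J$ is exactly a nonzero element of $F_K$. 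Thus the Proposition is precisely the statement that every blocking $K$ satisfies $F_K=\{0\}$.

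The structural fact I would build on is that both families $\{K : K\text{ blocking}\}$ and $\{K : F_K=\{0\}\}$ are closed upward under inclusion: imposing more vanishing constraints shrinks $F_K$ and kills more candidate witnesses $v$ (the first closure is the complement of the already-noted remark that a superset of a critical set is critical). Hence it suffices to verify $F_K=\{0\}$ only for the minimal blocking sets $K$. Each such $K$ is a minimal subset of inequalities $\{(v,\beta_j)\le 0 : j\in K\}$ whose imposition, on top of the constraints $v\in P$ and $(v,\psi-\delta)=1$ — the latter forcing $v\ne 0$, since $\psi-\delta$ is positive on every F-curve — renders the system infeasible. I would enumerate these by the standard hitting-set recursion: solve the LP; when a witness $v$ is returned, recurse on the subproblems obtained by adding to $K$, in turn, each block $j$ with $(v,\beta_j)>0$; prune a branch once $K$ has become blocking or $F_K$ has become $\{0\}$ (by upward closure), and collapse $S_8$-equivalent nodes. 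For each minimal blocking set so produced one then solves the feasibility LP for a nonzero $x\in F_K$, normalized by a fixed functional that is strictly positive on $\PBDcone(8)$: a Farkas certificate of infeasibility is exactly the required proof that $F_K=\{0\}$, and the content of the computation is that such a certificate is found in every case.

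The real difficulty is one of scale: the number of minimal blocking sets, and hence the size of the search, is large, which is why the verification was run on an HPC cluster. A naive implementation is hopeless, so I would rely on (i) canonicalizing every $K$ under $S_8$ before recursing and memoizing orbits; (ii) a good branching rule, for example branching on the block where the LP witness $v$ is most negative, in order to reach minimal blocking sets quickly; (iii) a precomputed cache of small critical sets, so that any $K$ whose complement contains one is pruned at once, together with cheap sufficient conditions for $F_K=\{0\}$ (for instance, once $K$ contains every block of a fixed cardinality, the pairwise-balance index condition in the definition of a PBD already forces $F_K=\{0\}$); and (iv) exact rational pivoting, or re-checking the returned primal and dual certificates afterwards, so that the final output is a proof rather than numerical evidence. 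The most delicate point in writing the argument up is making the $S_8$-reduction compatible with the certificates — producing, for each orbit of leaves, a single certificate that transports correctly under the group action — so that the finitely many recorded LP certificates really do establish the Proposition for all subsets of $[\rho]$ at once.
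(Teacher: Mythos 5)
The paper itself offers no argument for this Proposition beyond the phrase ``made by computer verification,'' so there is no written proof to compare your strategy against line by line. That said, your reduction and the LP framework around it are sound: the dictionary $J$ critical $\Leftrightarrow$ $K=[\rho]\setminus J$ not blocking, and $J$ supports no effective PBD $\Leftrightarrow F_K=\{0\}$, is exactly right (since effective PBDs have nonnegative coordinates, $F_K$ is precisely the set of effective PBDs supported inside $J$); both families $\{K:\text{blocking}\}$ and $\{K:F_K=\{0\}\}$ are indeed upward closed, so checking minimal blocking sets suffices; the hitting-set recursion, normalization by $\psi-\delta$, exact Farkas certificates, and $S_8$-orbit collapsing are all standard and correct ways to turn the statement into a finite, certifiable computation of the kind the paper's Proposition~5.6 and Remark on $\psi-\delta$ are clearly set up for.

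There is, however, one genuine error in the optimizations you propose. You claim as a ``cheap sufficient condition'' that once $K$ contains every block of a fixed cardinality, the pairwise-balance condition forces $F_K=\{0\}$. This is false. Take $K$ to contain all $21$ blocks of size $2$ in $[7]$: the Fano plane is an effective index-$1$ PBD on $[7]$ with seven blocks of size $3$ and no blocks of size $2$, so $F_K\neq\{0\}$. The same happens for the complementary $2$-$(7,4,2)$ design if you exclude size $3$, and more generally single-block-size $2$-designs defeat this test for any fixed cardinality. If you use this as an unconditional prune, the search becomes unsound and could silently skip a blocking $K$ with $F_K\neq\{0\}$, which is exactly the kind of counterexample the verification must be able to detect. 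Either drop this heuristic or replace it with one you can actually prove; the rest of your pipeline (canonicalization, branching on large violations, caching critical sets, and exact rational certification) does not have this problem and, once the bad prune is removed, constitutes a valid proof-by-certified-computation of the Proposition.
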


\newcommand{\netarxiv}[1]{\href{http://arxiv.org/abs/#1}{{\sffamily{\texttt{arXiv:#1}}}}}


\bibliographystyle{alpha} 
\bibliography{ref-semiample}

\end{document}